\documentclass[12pt,a4paper,reqno,twoside]{amsart}

\usepackage[english]{babel}
\usepackage{stmaryrd}
\usepackage{dsfont}
\usepackage[symbol*,ragged]{footmisc}
\usepackage[colorlinks,linkcolor=red,anchorcolor=blue,citecolor=blue,urlcolor=blue]{hyperref}
\usepackage{color,xcolor}

\usepackage{geometry}
\usepackage{amssymb}
\usepackage{amsmath}
\usepackage{mathrsfs}
\usepackage{amsfonts}
\usepackage{epsfig}

\usepackage{amsthm}
\usepackage{amsxtra}
\usepackage{bbding}
\usepackage{epsfig}
\usepackage{graphicx}
\usepackage{latexsym}
\usepackage{mathbbol}
\usepackage{bbold}

\usepackage{pifont}
\usepackage{wasysym}
\usepackage{skull}
\usepackage{float}

\DeclareSymbolFontAlphabet{\mathbb}{AMSb}
\DeclareSymbolFontAlphabet{\mathbbol}{bbold}

\usepackage{amscd}
\usepackage[all]{xy}
\allowdisplaybreaks[4]
\usepackage{setspace}

\theoremstyle{plain}
\newtheorem{theorem}{Theorem}[section]
\newtheorem{proposition}{Proposition}[section]
\newtheorem{lemma}[proposition]{Lemma}
\newtheorem{corollary}[theorem]{Corollary}
\newtheorem*{corollary*}{Corollary}

\theoremstyle{remark}
\newtheorem*{remark*}{\normalfont\scshape Remark}
\newtheorem*{notation}{\normalfont\scshape Notation}

\numberwithin{equation}{section}
\addtocounter{footnote}{1}

\renewcommand{\footnoterule}{
  \kern -3pt
  \hrule width 2.5in height 0.4pt
  \kern 3pt
}

\newcommand{\E}{\mathbb{E}}
\newcommand{\N}{\mathbb{N}}
\newcommand{\Z}{\mathbb{Z}}

\makeatletter
\@ifundefined{MakeUppercase}{}{}
\makeatother

\begin{document}
	
\title[A density version of quaternary Goldbach problem]
	  {A density version of quaternary Goldbach problem}

\author[Xiaoyang Hu, Meng Gao]
       {Xiaoyang Hu \quad \& \quad Meng Gao}

\address{[Xiaoyang Hu] School of Mathematics and Statistics, Jiangsu Normal University,
Xuzhou 221116, People's Republic of China}

\email{\textcolor{blue}{2220502116@cnu.edu.cn}}

\address{[Meng Gao] (Corresponding author) Department of Mathematics, China University of Mining and Technology,
Beijing 100083, People's Republic of China}

\email{\textcolor{blue}{meng.gao.math@gmail.com}}


\footnotetext[1]{Meng Gao is the corresponding author.  \\
 \quad\,\,
{\textbf{Keywords}}: Arithmetic combinatorics; Positive density; Transference principle \\

\quad\,\,
{\textbf{MR(2020) Subject Classification}}:11P32, 11B30.

}

\begin{abstract}
Let $\mathcal{P}$ denote the set of all primes, and let $\underline\delta(P)$ denote the relative lower density of a subset $P$ in $\mathcal{P}$. Suppose that $P_1, P_2, P_3, P_4$ are four subsets of primes with $\underline\delta(P_1)+\underline\delta(P_2)>1$ and $ \underline\delta(P_3)+\underline\delta(P_4)>1.$ Then for every sufficiently large even integer $n$, there exist primes $p_i \in P_i$ $(i=1,2,3,4)$ such that $n=p_1 +p_2 +p_3+p_4$. The condition is the best possible.
\end{abstract}

\maketitle

\section{Introduction}

The famous binary Goldbach conjecture states that every even integer greater than $2$ can be written as the sum of two primes. The ternary version concerns the representation of odd integers as sums of three primes. In 1937, Vinogradov \cite{VG} proved that the ternary Goldbach conjecture holds for all sufficiently large odd integers without assuming the Generalized Riemann Hypothesis (GRH). In 2013, Helfgott \cite{HG} gave a complete proof of the ternary Goldbach conjecture for all odd integers greater than 5.

On the other hand, after the pioneering work on Roth’s theorem in the primes by Green \cite{Gre} and the celebrated Green–Tao theorem \cite{GT}, additive properties of positive density subsets of the primes have been explored in many studies, such as density versions of the ternary Goldbach problem \cite{LP,Shao,Shen}, density versions of the binary Goldbach problem \cite{AS}, density versions of the Waring–Goldbach problem \cite{Gao}, and especially the quadratic Waring–Goldbach problem \cite{LZL,Tan,ZGH}, as well as some sparse subsets of primes \cite{Gri,MS,MMS,SDP,Te}.

The purpose of this paper is to prove a density version of quaternary Goldbach problem. Let $ \mathcal{P} $ be the set of all primes, and define the relative lower density of $P$ in $\mathcal{P}$ by $$ \underline{\delta}(P):=\liminf \limits_{N\rightarrow\infty}\dfrac{|P\cap[N]|}{|\mathcal{P}\cap[N]|}, $$ where $[N]:=\{1,\ldots,N\}$. Recently, Lacey, Mousavi, Rahimi and Vempati \cite{LMRV} proved the following density version of quaternary Goldbach problem. 

\begin{theorem}[{\cite[Theorem 1.2]{LMRV}}]\label{LRMV1}
Let $P$ be a subset of primes with density $\underline\delta(P)>\frac{1}{2}$. Then for every sufficiently large even integer $n$, there exist $p_i \in P$ $(i=1,2,3,4)$ such that $n = p_1 + p_2 + p_3 + p_4$.
\end{theorem}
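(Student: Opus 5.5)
The plan is to run the transference principle for quaternary sums, with Theorem~\ref{LRMV1} as the special case $P_1=P_2=P_3=P_4=P$. The natural first attempt, reducing the four-prime problem to two binary problems, does not work directly, because the density version of binary Goldbach \cite{AS} only covers almost all even integers. Instead we pass to a dense model.

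\textbf{Step 1: W-trick.} Fix $n$ large and even, and let $W=\prod_{p\le w}p$ for a slowly growing $w$. For each residue class $b \bmod W$ with $(b,W)=1$, set
\[
f(x)=\tfrac{\phi(W)}{W}\log(Wx+b)\,\mathbb 1_P(Wx+b)
\]
for $x\le N\approx n/(4W)$. Choose residues $b_1,\dots,b_4$ with $b_1+b_2+b_3+b_4\equiv n \pmod W$, all coprime to $W$, so that each corresponding $f$ has average density at least $\delta>1/2$. This choice is possible by the lower density hypothesis, using an averaging argument over admissible quadruples of residue classes.

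\textbf{Step 2: pseudorandom majorant.} Each $f$ is bounded by the Green--Tao type majorant $\nu(x)=\frac{\phi(W)}{W}\log(Wx+b)\,\mathbb 1_{\text{prime}}$. This majorant has Fourier uniformity $\|\hat\nu-\hat 1_{[N]}\|_\infty=o(N)$ by the W-trick and Vinogradov's estimates. It also satisfies the restriction estimate $\sum_\theta|\hat\nu(\theta)|^q\ll N^{q-1}$ for $q>2$, and in particular for $q=3$ or $q=4-\epsilon$. The transference lemma then yields bounded functions $g_i:[N]\to[0,1]$ with $\|\hat f_i-\hat g_i\|_\infty\le\eta N$ and $\E g_i=\E f_i$. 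Since the count of $x_1+\dots+x_4=m$ is $\int\prod\hat f_i(\theta)e(-m\theta)\,d\theta$, the telescoping plus restriction bound shows that the weighted count for $f$ differs from that for $g$ by at most $\eta^{c}N^3$.

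\textbf{Step 3: dense model count (main obstacle).} We must show that for $g_i:[N]\to[0,1]$ with mean $>1/2+\epsilon$, and for every $m$ in the middle range around $2N$, we have $\sum_{x_1+\dots+x_4=m}\prod g_i(x_i)\gg_\epsilon N^3$. Mean $>1/2$ alone is not enough for a fixed $m$ near $4N$, but $m=(n-\sum b_i)/W$ is close to $4N$. To handle this I would take $N=n/(2W)$, so that the target sits at the midpoint $2N$ of $[4,4N]$. It is also necessary to use that the densities are relative lower densities on all initial segments, which puts mass at least about $\delta$ on $[1,N/2]$ and gives lower bounds on all initial intervals.

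The argument is to split into pairs. First show that $h=g_1*g_2$ satisfies $h(y)\ge c N$ for all $y$ in a long interval around $N$; this uses the pigeonhole fact that two sets with densities summing to more than $1$ on $[1,y]$ must have sums representing $y$ robustly, via $\sum_{x} g_1(x)g_2(y-x)\ge (\E_{[y]}g_1+\E_{[y]}g_2-1)y$. Then combine with $g_3*g_4$ in the same way and sum over $y$. The main difficulty, and the place I expect the real work, is the robust version: the transference error is only controlled on average, so I need $\E_{[y]} g_i>1/2+\epsilon/2$ for all $y\gg N$. I would obtain this by transferring the initial-segment densities, since $\hat f$ and $\hat g$ close implies that interval averages are close up to $O(\eta^{c})$.
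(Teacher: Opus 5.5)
\textbf{The gap is in Step 1.} This is the local problem, which the paper identifies as the key step. Your claim that one can pick $b_1,\dots,b_4\in\mathbb{Z}_W^*$ with $\sum b_i\equiv n\pmod W$ and $P$ of relative density $>1/2$ in every class $b_i$ is false. Let $P$ consist of all primes $\equiv 1\pmod 3$ together with a tenth of the primes $\equiv 2\pmod 3$, equidistributed modulo the other primes, so $\underline\delta(P)=0.55$. Write $f(b)$ for the relative density of $P$ in the class $b$. The classes with $f(b)>1/2$ are exactly those with $b\equiv 1\pmod 3$, and any four of them sum to $1\pmod 3$. So for $n\equiv 0\pmod 6$ no such quadruple exists. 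The same failure occurs for $n\not\equiv 4\pmod 5$ if $P$ is all primes $\equiv 1\pmod 5$ plus two fifths of the rest, so this is not a $3$-adic accident.

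Averaging over admissible quadruples does not even give the weaker $\sum_i f(b_i)>2$, because the constraint $\sum b_i\equiv n$ makes the marginals non-uniform at small primes. In the first example with $n\equiv 1\pmod 3$, the admissible patterns mod $3$ are $(1,1,1,1)$ and the four permutations of $(1,2,2,2)$. The average of $\sum_i f(b_i)$ is then $(4+4\cdot 1.3)/5=1.84<2$, although the maximum is $4$. One has to locate a good quadruple, not average.

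What your Step 3 actually uses is the pairwise statement: $\sum b_i\equiv n$ with $f(b_1)+f(b_2)>1$ and $f(b_3)+f(b_4)>1$. That is a genuine local theorem and is the content of Theorem~\ref{Theorem-2}. Its proof runs a forced chain of inequalities at the points $t_k(1)n$, closes it using $2t_{2r-1}(1)+2t_{2r-2}(1)\equiv 1\pmod p$, then lifts to prime powers and glues by CRT; the prime $3$ is treated separately in Corollary~\ref{coro1}. Your proposal gives no argument for this statement.

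\textbf{Step 3 also rests on bounds you cannot obtain.} It uses the per-function bound $\E g_i>1/2$, which is false by the above, and $\E_{[y]}g_i>1/2+\epsilon/2$ for all $y\gg N$. The hypothesis controls $|P\cap[1,X]|$ summed over all classes mod $W$, not inside the particular class $b_i$ chosen at scale $n/2$. Inside that class the density on $[1,y]$ with $y\asymp N$ can be much smaller than on $[1,N]$.

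These bounds are in fact unnecessary. Take $N\approx n/(2W)$, not $n/(4W)$ as in your Step 1; then the target lies in $(2N-4,2N)$. For $y$ near $N$, summing over $x$ with $x,y-x\in[1,N]$,
$g_1*g_2(y)\ge\sum_x\bigl(g_1(x)+g_2(y-x)-1\bigr)\ge(\E g_1+\E g_2-1)N-O(|y-N|)$.
This uses only total masses, so once the pairwise local statement is available your pairing closes.

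\textbf{How the paper does it.} It avoids interval geometry altogether. It keeps only primes up to $n/2$ and works in $\mathbb{Z}_{N'}$, with $N'$ (the paper's $N$) a prime in $[(1+\kappa)n/W,(1+2\kappa)n/W]$. This rules out wraparound and makes the four $\mathbb{Z}_{N'}$-densities add up to more than $1$, so the cyclic counting Lemma~\ref{sumset} applies. That route needs only $\sum_i f(b_i)>2$ with every $f(b_i)>0$ (Corollary~\ref{coro1}). It also works with Green's dense model $a*\beta*\beta$, which is not supported on an interval. Your interval argument instead needs a model $g_i:[N]\to[0,1]$, which calls for an abstract dense model theorem rather than Green's construction.
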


Motivated by their result, our main results are as follows:

\begin{theorem}\label{Theorem-1}
Let $P_1, P_2, P_3, P_4$ be four subsets of primes with
\begin{equation*}
\underline\delta(P_1)+\underline\delta(P_2)>1,  \ \underline\delta(P_3)+\underline\delta(P_4)>1.
\end{equation*}
Then for every sufficiently large even integer $n$, there exist $p_i \in P_i$ $(i=1,2,3,4)$ such that $n = p_1 + p_2 + p_3 + p_4$.
\end{theorem}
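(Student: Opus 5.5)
We prove Theorem~\ref{Theorem-1} with the transference principle, following the scheme of \cite{LMRV} and of the density versions of the ternary Goldbach problem \cite{LP,Shao}. Throughout, $n$ is a large even integer and $N=n$.

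\textbf{Step 1: local condition.} First fix residues modulo $W=\prod_{p\le w}p$, with $w$ slowly growing. For each $i$, the density of $P_i$ splits over the reduced classes $b \bmod W$; write $\delta_i(b)$ for the relative density of $P_i\cap[N]$ in the class $b$. We need classes $b_1,\dots,b_4$ with $b_1+b_2+b_3+b_4\equiv n \pmod W$ and
\[
\delta_1(b_1)+\delta_2(b_2)>1+\epsilon,\qquad \delta_3(b_3)+\delta_4(b_4)>1+\epsilon .
\]
The pairing is what makes this work. Averaged over $b_1$, the sum $\delta_1(b_1)+\delta_2(m-b_1)$ is $\underline\delta(P_1)+\underline\delta(P_2)>1$ for every admissible $m$. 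So for each $m$ there are many $b_1$ giving $\delta_1(b_1)+\delta_2(m-b_1)>1+\epsilon$, and similarly for the pair $(3,4)$ with $n-m$. It remains to choose $m\equiv$ a sum of two units modulo every $p\le w$ such that $n-m$ is also such a sum. For $p=2$ this uses that $n$ is even, and for $p\ge3$ it is a trivial check; CRT then combines the primes. This is the step where both the hypothesis on pairs and the parity of $n$ are used.

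\textbf{Step 2: transference.} After the $W$-trick, take functions $f_i=\frac{\phi(W)}{W}\log(Wx+b_i)\,1_{P_i}(Wx+b_i)$ on $[N/W]$. Each $f_i$ is majorized by the pseudorandom Green--Tao enveloping sieve $\nu$, which satisfies the restriction estimate $\|\hat f_i\|_{q}\ll 1$ for some $q\in(2,3)$. Now decompose $f_i=g_i+h_i$, where $g_i$ is bounded by $1+o(1)$, $\|\hat h_i\|_\infty=o(1)$, and the mean of $g_i$ is about $\delta_i(b_i)$. With four functions and restriction exponent $q<4$, a Hölder/Parseval argument shows that replacing any $f_i$ by $g_i$ changes the count $\sum_{x_1+\dots+x_4=n'} f_1(x_1)\cdots f_4(x_4)$ by $o(N^3)$.

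\textbf{Step 3: dense model count (the main obstacle).} We must show that $\sum g_1*g_2*g_3*g_4(n')\gg N^3$ when $0\le g_i\le 1$, $\E g_1+\E g_2>1+\epsilon$ and $\E g_3+\E g_4>1+\epsilon$. Our plan is to prove a robust two-set lemma. If $0\le g,g'\le 1$ on $[M]$ with $\E g+\E g'>1+\epsilon$, then $g*g'(m)\gg\epsilon M$ for all $m$ in an interval $I$ of length $\gg\epsilon M$ around the middle. Pigeonholing on the sets $\{g>\epsilon/4\}$ gives this, since for such $m$ the number of representations is at least $(\E g+\E g'-1-\epsilon/2)M$-ish. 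The same holds for $g_3*g_4$ on an interval $J$.

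The delicate part is making $I+J$ contain $n'$ while keeping positivity. Here one must track where the densities sit, because the mass of the $g_i$ can concentrate near the endpoints. Working on $\Z_{M'}$ with $M'$ slightly larger than $2N/W$ and using the Cauchy--Davenport/pigeonhole bound cyclically sidesteps this: modulo $M'$ every residue has $\gg\epsilon M$ representations. The wrap-around is then prevented by restricting supports to $[N/(4W)]$-type ranges, absorbing the density loss into $\epsilon$. This is essentially how \cite{LMRV} handles a single set, and we expect the same truncation to work pairwise.

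Finally, summing over $m$ gives $f_1*f_2*f_3*f_4(n')\gg N^3$. The weights are $\ll(\log N)^4$ and prime powers contribute negligibly, so we obtain genuine primes $p_i\in P_i$ with $n=p_1+p_2+p_3+p_4$.
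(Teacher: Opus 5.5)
There is a genuine gap in Step 1, and Step 3 uses a normalization that does not work.

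\textbf{Step 1.} You claim that, for every admissible $m$, the average of $\delta_1(b_1)+\delta_2(m-b_1)$ over those $b_1$ with $b_1$ and $m-b_1$ both units equals $\E\delta_1+\E\delta_2$. This is false. It holds in general only when $m\equiv 0\pmod p$ for every odd $p\mid W$, because only then does $b_1\mapsto m-b_1$ permute $\Z_W^*$. Otherwise the classes $b_1\equiv m$ and $b_2\equiv m\pmod p$ drop out, and the average can fall below $1$. For example, let $\delta_1=\delta_2$ depend only on $b\bmod 5$, equal to $1$ on the class $1$ and $0.35$ on the classes $2,3,4$. Then $\E\delta_1+\E\delta_2=1.025$. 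Yet for every $m\equiv 1\pmod 5$ the admissible pairs have $(b_1,b_2)\bmod 5\in\{(2,4),(3,3),(4,2)\}$, and each gives $\delta_1(b_1)+\delta_2(b_2)=0.7$.

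Taking $m\equiv 0$ and $n-m\equiv 0$ together is possible only when $n\equiv 0\pmod W$. So finding a good split $m$ is exactly the nontrivial local problem, which the paper solves in Theorem~\ref{Theorem-2} and Corollary~\ref{coro1}; a residue-only check plus CRT does not address it. At an odd prime $p$ you can argue as follows.
Let $\Sigma_{12}$ be the set of $m$ for which some units $x$, $m-x$ satisfy $\delta_1(x)+\delta_2(m-x)\ge\E(\delta_1+\delta_2)$. Averaging over $x\mapsto -x$ gives $0\in\Sigma_{12}$. If $m\neq 0$ lies outside $\Sigma_{12}$, summing the $p-2$ failed inequalities gives $\delta_1(m)+\delta_2(m)>\E(\delta_1+\delta_2)$, so $2m\in\Sigma_{12}\setminus\{0\}$. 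Hence $|\Sigma_{12}|\ge (p+1)/2$, likewise $|\Sigma_{34}|\ge(p+1)/2$, and therefore $\Sigma_{12}+\Sigma_{34}=\Z_p$.
For composite $W$ the $\delta_i(b)$ need not factor over the primes, so plain CRT is not enough. You need a fibred induction as in the paper: average over $\Z_{q_2}^*$, solve modulo $q_1$, then solve modulo $q_2$ on the fibres.

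\textbf{Step 3.} With $M'\approx 2N/W$ and supports cut down to $[N/(4W)]$, each model function has density only about $\delta_i(b_i)/8$ in $\Z_{M'}$. That is a constant-factor loss, not something absorbed into $\epsilon$. Your assertion that every residue mod $M'$ has $\gg\epsilon M$ representations already fails for a pair: take $g=g'=\mathbf{1}_{[1,(1+2\epsilon)M/2]}$. A four-fold Cauchy--Davenport bound does not apply either. The interval form of your pair lemma is also unusable, since dense models need not be supported on intervals.

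What works, and what the paper does following \cite{LP}, is this:
\begin{itemize}
\item Use only primes in $[1,n/2]$; the lower-density hypothesis still gives relative density at least $\underline\delta(P_i)-o(1)$ there.
\item Work in $\Z_{N'}$ with $N'$ a prime in $[(1+\kappa)n/W,(1+2\kappa)n/W]$. The four model sets then have densities about $\delta_i(b_i)/2$, which sum to more than $1$ since $\sum_i\delta_i(b_i)>2$.
\item Four-fold sums lie in $[0,2n/W]$ while $n'\approx n/W<N'$, so there is no wrap-around.
\item Lemma~\ref{sumset} with $k=4$ then gives $\gg N'^3$ solutions.
\end{itemize}
No pairwise lemma is needed at this stage; the pairing in the hypotheses matters only for the local problem.
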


Note that Theorem \ref{Theorem-1} immediately implies Theorem \ref{LRMV1}. 
We remark that the constant 1 in Theorem \ref{Theorem-1} is sharp in two different ways, 
and the statement fails if we replace the symbol $>$ by $\geq$. 
In fact, we either take
$$P_1 = P_2 = P_3 = \{ p \in \mathcal{P} : p \equiv 1 \pmod 3 \}, \quad P_4 = \mathcal{P}\setminus \{3\},$$
or take
$$P_1 = \varnothing, \quad P_2 = P_3 = P_4 = \mathcal{P}\setminus \{2\}.$$
Then $\underline{\delta}(P_1) + \underline{\delta}(P_2) = 1$ and $\underline{\delta}(P_3) + \underline{\delta}(P_4) > 1$. 
But any even integer $n \equiv 0 \pmod 3$ cannot be written as $p_1 + p_2 + p_3 + p_4$ with $p_i \in P_i$ $(i = 1, 2, 3, 4)$.

In particular, we have the following.

\begin{corollary}
 Let $P_1, P_2, P_3, P_4$ be four subsets of primes with
\begin{equation*}
\underline\delta(P_1)\geq\frac{1}{2}, \ \underline\delta(P_2)>\frac{1}{2},  \ \underline\delta(P_3)\geq \frac{1}{2}, \ \underline\delta(P_4)>\frac{1}{2}.
\end{equation*}
Then for every sufficiently large even integer $n$, there exists $p_i\in P_ i$ $(i=1,2,3,4)$ such that $n=p_1 +p_2 +p_3+p_4$.   
\end{corollary}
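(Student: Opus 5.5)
The Corollary should follow directly from Theorem \ref{Theorem-1}, so the plan is simply to verify that its hypotheses hold. The only ingredient is the superadditivity of the lower limit. For any sequences $a_N,b_N$ we have
$$\liminf_{N\to\infty}(a_N+b_N)\ \ge\ \liminf_{N\to\infty} a_N+\liminf_{N\to\infty} b_N,$$
but we will not even need this. The conditions of Theorem \ref{Theorem-1} involve only the sums of the individual lower densities $\underline\delta(P_1)+\underline\delta(P_2)$ and $\underline\delta(P_3)+\underline\delta(P_4)$, not a lower density of a union. So we can add the assumed bounds directly.

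From $\underline\delta(P_1)\ge \frac12$ and $\underline\delta(P_2)>\frac12$ we get
$$\underline\delta(P_1)+\underline\delta(P_2)>\tfrac12+\tfrac12=1,$$
because adding a weak inequality to a strict one gives a strict inequality. The same computation with $\underline\delta(P_3)\ge\frac12$ and $\underline\delta(P_4)>\frac12$ gives
$$\underline\delta(P_3)+\underline\delta(P_4)>1.$$

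Both hypotheses of Theorem \ref{Theorem-1} therefore hold. That theorem then gives, for every sufficiently large even $n$, primes $p_i\in P_i$ $(i=1,2,3,4)$ with $n=p_1+p_2+p_3+p_4$.

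There is no genuine obstacle here. The one point to watch is that each pair contains at least one strict inequality; this is exactly what the Corollary assumes, and it matches the sharpness examples given after Theorem \ref{Theorem-1}, where a sum equal to $1$ is not enough.
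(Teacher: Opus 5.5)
Your proof is correct and matches the paper, which states this corollary as an immediate ``in particular'' consequence of Theorem \ref{Theorem-1}: $\underline\delta(P_1)\ge\frac12$ and $\underline\delta(P_2)>\frac12$ give $\underline\delta(P_1)+\underline\delta(P_2)>1$, and the same holds for $P_3,P_4$. As you note yourself, the aside about superadditivity of $\liminf$ is not needed.
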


Theorem \ref{Theorem-1} is proved using Green's transference principle from additive combinatorics developed in \cite{Gre}. The key to applying the transference principle is to solve the local problem associated with Theorem \ref{Theorem-1}. Based on the ideas of \cite[Section 2]{LP}, we prove a four-function variant of \cite[Theorem 1.2]{LP} that applies to the local version of the quaternary Goldbach problem (Corollary \ref{coro1}). The precise statement is as follows.

\begin{theorem}\label{Theorem-2}
Let $q$ be a positive odd integer with $(q, 6) = 1$.  Let $f_1, f_2, f_3,f_4$ be four real-valued functions on $\Z_q^*$. Then for any $n \in \Z_q$, there exist $x_1, x_2, x_3,x_4 \in \Z_q^*$ such that $n = x_1+ x_2+ x_3+x_4 $ and
\begin{equation*}
f_1(x_1)+f_2(x_2)\geq\E_{x\in\Z_q^*}(f_1(x)+f_2(x)),  
\end{equation*}
\begin{equation*}
f_3(x_3)+f_4(x_4)\geq\E_{x\in\Z_q^*}(f_3(x)+f_4(x)).
\end{equation*}
\end{theorem}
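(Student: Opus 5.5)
The plan is to argue by induction on the number of distinct prime factors of $q$, following the scheme of \cite[Section 2]{LP}. The two constraints are decoupled except through the linear condition $x_1+x_2+x_3+x_4=n$. So the natural intermediate objects are the sets
$$A_{12}=\{m\in\Z_q:\ \exists\, x_1,x_2\in\Z_q^*,\ x_1+x_2=m,\ f_1(x_1)+f_2(x_2)\ge \E(f_1+f_2)\},$$
and $A_{34}$ defined analogously. The theorem is exactly the statement $A_{12}+A_{34}=\Z_q$.

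\textbf{Step 1 (size of $A_{12}$).} Averaging $f_1(x_1)+f_2(x_2)$ over all pairs in $\Z_q^*\times\Z_q^*$ gives exactly $\E(f_1+f_2)$. Grouping the pairs according to $m=x_1+x_2$, some fibre must attain at least the average, so $A_{12}\neq\varnothing$. One wants more than this. Replacing $f_2$ by $f_2(\cdot\,u)$ and $f_1$ by $f_1(\cdot\,u)$ for $u\in\Z_q^*$ preserves the averages. Hence the family of such dilated problems, together with the swap $x_1\leftrightarrow x_2$, should show that $A_{12}$ is a union of many classes. The aim is to prove the following: for $q=p$ prime, $|A_{12}|\ge (p+1)/2$, or $A_{12}$ contains a nonzero residue together with $0$. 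Either conclusion lets Cauchy--Davenport (or a direct pigeonhole with $|A_{12}|+|A_{34}|>p$) give $A_{12}+A_{34}=\Z_p$.

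\textbf{Step 2 (prime case $q=p\ge5$).} This is where $(q,6)=1$ enters. For $p=3$ the example in the introduction shows the statement fails. For $p\ge5$ I would sort $f_1$ decreasingly and $f_2$ decreasingly, and pair the $j$-th largest value of $f_1$ with the $(p-j)$-th largest value of $f_2$ (a rearrangement/majorization argument). This shows that at least half of the "anti-sorted" matchings exceed the mean, and the sums arising cover at least $(p+1)/2$ residues unless the $f_i$ are essentially constant. In the constant case every $m$ with a representation as a sum of two units works, which is all of $\Z_p$ for $p\ge3$. I expect this step to be the main obstacle. The delicate case is when the large values of $f_1,f_2$ concentrate on a few residues, and I would handle it by a direct case analysis on $p$ small versus large.

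\textbf{Step 3 (prime powers and gluing).} For $q=p^k$, the reduction map $\Z_{p^k}^*\to\Z_p^*$ has fibres of equal size, and sums of elements of two fibres hit every lift. I therefore replace each $f_i$ by its fibre average on $\Z_p^*$, solve mod $p$, and then pick the elements inside the fibres that are at least the fibre average while meeting the prescribed lift. For $q=q_1q_2$ with $(q_1,q_2)=1$, write $\Z_q^*\cong\Z_{q_1}^*\times\Z_{q_2}^*$. Define functions on $\Z_{q_1}^*$ by $F_i(y)=\max$ over the appropriate $\Z_{q_2}$-solution, obtained by applying the inductive hypothesis in the $q_2$-coordinate with $y$ fixed. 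Then apply the induction hypothesis mod $q_1$ to the $F_i$, using that $\E F_1+\E F_2\ge \E(f_1+f_2)$. This bookkeeping of the two separate constraints through the fibre-wise maximisation is the second point needing care. I would follow \cite{LP} closely there, noting that because the two pairs are coupled only through $n$, the $q_2$-step must be done jointly for all four variables rather than pair by pair.
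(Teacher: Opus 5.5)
Your skeleton (reduce to $A_{12}+A_{34}=\Z_q$, settle primes, lift to prime powers, glue by CRT) is the paper's. However, the prime case, which is the heart of the proof, is not established. First, your Step 1 dichotomy does not close. If you only know $\{0,m\}\subseteq A_{12}$ and $\{0,m'\}\subseteq A_{34}$, Cauchy--Davenport gives $|A_{12}+A_{34}|\ge 3$, not $p$; you need $|A_{12}|+|A_{34}|>p$. Second, neither of your tools delivers that bound. Dilating the $f_i$ by $u$ merely replaces $A_{12}$ by $u^{-1}A_{12}$. The anti-sorted matching in Step 2 controls values, not the residues $x_1+x_2$ where good pairs land. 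Moreover, a matching with mean $K_1=\E(f_1+f_2)$ guarantees only one pair $\ge K_1$, not half (take $f_1=\mathbf{1}_{\{a\}}$, $f_2=0$).

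The missing idea, on which the paper's argument rests, is the following. $0\in A_{12}$, because $\sum_{x\in\Z_p^*}(f_1(x)+f_2(-x))=(p-1)K_1$. If $m\neq0$ and $m\notin A_{12}$, summing $f_1(x)+f_2(m-x)<K_1$ over the $p-2$ values $x\neq0,m$ gives $f_1(m)+f_2(m)>K_1$, so $2m\in A_{12}$ via $x_1=x_2=m$. The paper feeds this and its $A_{34}$-analogue into a contradiction argument: it follows a putative bad $n$ along the points $t_k(1)n$ and closes with the order of $4$ modulo $p$. The same observation also gives your target directly. The complement $C$ of $A_{12}$ lies in $\Z_p^*$ and satisfies $C\cap 2C=\varnothing$. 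Multiplication by $2$ permutes $\Z_p^*$ in cycles of length $\mathrm{ord}_p(2)\ge 2$, so $|C|\le (p-1)/2$. Hence $|A_{12}|,|A_{34}|\ge (p+1)/2$, and pigeonhole finishes.

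Your gluing step also needs repair. Defining $F_i(y)$ as a maximum over a $\Z_{q_2}$-solution does not give a function of a single coordinate: as you observe, the $q_2$-problem couples all four variables. Fibrewise maxima also cannot be made compatible with the sum condition mod $q_2$. The paper averages instead, and in the other order:
\begin{itemize}
\item set $g_i(x)=\E_{y\in\Z_{q_2}^*}f_i((x,y))$, which has the same pair-means $K_1,K_2$;
\item solve mod $q_1$ to fix $x_1,\dots,x_4$;
\item apply the hypothesis mod $q_2$ to $h_i(y)=f_i((x_i,y))$, whose pair-means are $g_1(x_1)+g_2(x_2)\ge K_1$ and $g_3(x_3)+g_4(x_4)\ge K_2$.
\end{itemize}
Your prime-power step is fine provided $y_1,y_2$ are chosen jointly with $y_1+y_2$ prescribed, by averaging $h_1(y)+h_2(z_1-y)$ over $y$. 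Choosing each separately above its fibre average does not control the lift.

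Finally, the introduction's mod-$3$ example is not a counterexample to this statement. There $\E(f_1+f_2)=1$, and $(x_1,\dots,x_4)=(1,2,1,2)$ satisfies the conclusion for $n=0$. It is only a boundary case for Theorem \ref{Theorem-1}, where each $x_i$ must lie in the support of $f_i$. The argument above in fact works for $p=3$ as well.
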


We will give the proof of Theorem \ref{Theorem-2} and the local results we need in Section 2, and prove Theorem \ref{Theorem-1} in Section 3.

\begin{notation}
	For a positive integer $q$, let $\mathbb{Z}_{q}=\mathbb{Z}/q\mathbb{Z}$ and $\mathbb{Z}_{q}^{\ast}=\{ a\in \mathbb{Z}_{q}:(a,q)=1 \}$.
	For a set $\mathcal{A}$, we write $\mathbf{1}_{\mathcal{A}}(x)$ for its characteristic function. The letter $p$, with or without subscript, denotes a prime number. As usual, we use $\varphi(n)$ to denote Euler's function. Let $f:\mathcal{B}\rightarrow\mathbb{C}$ be a function and $\mathcal{B}_1$ be a non-empty finite subset of $\mathcal{B}$. Denote by $\mathbb{E}_{x\in\mathcal{B}_1}f(x)$ the average value of $f$ on $\mathcal{B}_1$, i.e.,
	\begin{equation*}
		\mathbb{E}_{x\in\mathcal{B}_1}f(x)=\dfrac{1}{|\mathcal{B}_1|}\sum\limits_{x\in\mathcal{B}_1}f(x).
	\end{equation*}
	For a function $f:\mathbb{Z}_N\rightarrow \mathbb{C}$, define the Fourier transform on $\mathbb{Z}_N$ by
	\begin{equation*}
		\tilde{f}(r) = \sum_{x \in \mathbb{Z}_N} f(x) e(-xr/N),
	\end{equation*}
	where $e(x) = e^{2\pi\sqrt{-1}x}$. Also, for functions $f,g$ over $\mathbb{Z}_N$, we define convolution $f\ast g$ by
	\begin{equation*}
		f\ast g(x) = \sum_{y \in \mathbb{Z}_N} f(y) g(x-y).
	\end{equation*}
	It is easy to check that  $\widetilde{f\ast g}= \tilde{f}\cdot\tilde{g}$.
\end{notation}

\section{Local results}
\begin{proof}[Proof of Theorem \ref{Theorem-2}]
Let $K_1=\E_{x\in\Z_q^*}(f_1(x)+f_2(x))$, $K_2=\E_{x\in\Z_q^*}(f_3(x)+f_4(x))$.
    We use induction on the number of prime divisors of $q$.
    
    First, consider $q=p$ where $p\geq 5$ is a prime. Define $S_i=\sum_{x\in\Z_p^*}f_i(x)$ for $i=1,2,3,4$, then
\begin{equation*}
    K_1(p-1)=S_1+S_2, \ K_2(p-1)=S_3+S_4.
\end{equation*}

Assume on the contrary that there exists a counterexample $n \in \Z _p$ such that for any $x_1, x_2, x_3,x_4 \in \Z^*_ p$ with $x_1+ x_2+ x_3+x_4=n$, 
\begin{equation*}
    f_1(x_1)+f_2(x_2)<K_1 \ \text{or} \ f_3(x_3)+f_4(x_4)<K_2 .
\end{equation*}

In other words, all $y\in\Z_p$ satisfy at least one of the following statements.

\textbf{(a)} For all $x\in \Z^ *_ p\setminus\{y\}$, $ f_1(x)+f_2(y-x)< K_1 .$

\textbf{(b)} For all $x\in \Z^ *_ p\setminus\{n-y\}$, $f_3(x)+f_4(n-y-x)< K_2.   $

\textbf{Case 1.}
If statement \textbf{(a)} holds for $y=0$, then for any $x\in \Z^ *_ p$  we have  $ f_1(x)+f_2(-x)< K_1 $. Summing this inequality over all $x \neq 0$,  we obtain a contradiction:
\begin{equation*}
S_1+S_2=\sum_{x\in\Z_p^*}(f_1(x)+f_2(x))=\sum_{x\in\Z_p^*}(f_1(x)+f_2(-x))<K_1(p-1).
\end{equation*}

We remark that the following \textbf{Case 1'} is the symmetric version of Case 1, but for the convenience of the proof below, we still state it explicitly.

\textbf{Case 1'.}
If statement \textbf{(b)} holds for $y=n$, then for any $x\in \Z^ *_ p$  we have  $ f_3(x)+f_4(-x)< K_2 $. Summing this inequality over all $x \neq 0$,  we obtain a contradiction:
\begin{equation*}
S_3+S_4=\sum_{x\in\Z_p^*}(f_3(x)+f_4(x))=\sum_{x\in\Z_p^*}(f_3(x)+f_4(-x))<K_2(p-1).
\end{equation*}

\textbf{Case 2.}
If statement \textbf{(b)} holds for $y=0$ and $n=0$, then for any $x\in \Z^ *_ p$  we have  $ f_3(x)+f_4(-x)< K_2 $. Summing this inequality over all $x \neq 0$,  we obtain a contradiction:
\begin{equation*}
S_3+S_4=\sum_{x\in\Z_p^*}(f_3(x)+f_4(x))=\sum_{x\in\Z_p^*}(f_3(x)+f_4(-x))<K_2(p-1).
\end{equation*}

By \textbf{Case 1} and \textbf{Case 2}, we know that $n\neq 0$ and the statement \textbf{(b)} holds for $y=0$. Therefore, for any $x\in \Z^ *_ p\setminus\{n\}$  we have  $ f_3(x)+f_4(n-x)< K_2 $. Summing this inequality over all $x \neq 0,n$,  we have
\begin{equation*}
	S_3+S_4-f_3(n)-f_4(n)=\sum_{x\in\Z_p^*\setminus\{n\}}(f_3(x)+f_4(n-x))<K_2(p-2).
\end{equation*}

Thus 
\begin{equation}\label{En}
K_2<f_3(n)+f_4(n).
\end{equation}

Since the statement \textbf{(a)} fails to hold for $y=0$, there exists $x_0\in \Z^*_p$  such that   $f_1(x_0)+f_2(-x_0)\geq K_1$. Thus $x_0-x_0+n+n= 2n $ is not a counterexample. 

Inequality (\ref{En}) means that the statement \textbf{(b)} fails to hold for $y=-n$. Therefore by the assumption of contradiction we have that the statement \textbf{(a)} holds for $y=-n$, i.e., for any $x\in \Z^*_ p\setminus\{-n\}$, we have $f_1(x)+f_2(-n-x)< K_1$. Similarly we have
\begin{equation}\label{E-n}
K_1<f_1(-n)+f_2(-n).
\end{equation}

Inequality (\ref{E-n}) means that the statement \textbf{(a)} fails to hold for $y=-2n$. Therefore by the assumption of contradiction we have that the statement \textbf{(b)} holds for $y=-2n$, i.e., for any $x\in \Z^*_ p\setminus\{3n\}$, we have $f_3(x)+f_4(3n-x)< K_2$. By \textbf{Case 1'}, we only need to consider $3n\not= 0 $. Similarly we have
\begin{equation}\label{E3n}
K_2<f_3(3n)+f_4(3n).
\end{equation}

Inequality (\ref{E3n}) means that the statement \textbf{(b)} fails to hold for $y=-5n$. Therefore by the assumption of contradiction we have that the statement \textbf{(a)} holds for $y=-5n$, i.e., for any $x\in \Z^ *_ p\setminus\{-5n\}$, we have $f_1(x)+f_2(-5n-x)< K_1$. By \textbf{Case 1}, we only need to consider $-5n\not= 0 $. Similarly we have
\begin{equation}\label{E-5n}
K_1<f_1(-5n)+f_2(-5n).
\end{equation}

Inequality (\ref{E-5n}) means that the statement \textbf{(a)} fails to hold for $y=-10n$. Therefore by the assumption of contradiction we have that the statement \textbf{(b)} holds for $y=-10n$, i.e., for any $x\in \Z^ *_ p\setminus\{11n\}$, we have $f_3(x)+f_4(11n-x)< K_2$. By \textbf{Case 1'}, we only need to consider $11n\not= 0 $. Similarly we have
\begin{equation}\label{E11n}
K_2<f_3(11n)+f_4(11n).
\end{equation}

Write $t_0(x)=x$, $t_1(x)=1-2x$ and $t_{k}(x)=t_1( t_{k-1}(x))$ when $k>1$. Observe the coefficient of 
$n$ in each of the inequalities (\ref{En}), (\ref{E-n}), (\ref{E3n}), (\ref{E-5n}) and (\ref{E11n}). We now proceed by induction on $k$ to show that either we obtain a contradiction by \textbf{Case 1} or \textbf{Case 1'}, or for all $k\in \N$ we have
\begin{equation}\label{E2k+1n}
    K_1<f_1(t_{2k+1}(1)n)+f_2(t_{2k+1}(1)n),\ t_{2k+1}(1)n\neq 0,
\end{equation}
\begin{equation}\label{E2kn}
    K_2<f_3(t_{2k}(1)n)+f_4(t_{2k}(1)n) ,\ t_{2k}(1)n\neq 0.
\end{equation}

For $k=0$, inequalities (\ref{E2k+1n}) and (\ref{E2kn}) follow from inequalities (\ref{En}) and (\ref{E-n}).

We now assume that inequalities (\ref{E2k+1n}) and (\ref{E2kn}) holds for $k-1$.

Again, induction hypothesis inequality (\ref{E2k+1n}) for $k-1$ means that the statement \textbf{(a)} fails to hold for $y=2t_{2k-1}(1)n$. Therefore by the assumption of contradiction we have that the statement \textbf{(b)} holds for $y=2t_{2k-1}(1)n$. Here $n-y=t_{2k}(1)n$, thus for any $x\in \Z^*_p\setminus\{t_{2k}(1)n\}$, we have $f_3(x)+f_4(t_{2k}(1)n-x)< K_2$. By \textbf{Case 1'}, we only need to consider $t_{2k}(1)n\not= 0 $. Similarly we have
\begin{equation}\label{E2kn2}
K_2<f_3(t_{2k}(1)n)+f_4(t_{2k}(1)n).
\end{equation}

Inequality (\ref{E2kn2}) means that the statement \textbf{(b)} fails to hold for $y=n-2t_{2k}(1)n$. By the assumption of contradiction, we have that the statement \textbf{(a)} holds for $y=n-2t_{2k}(1)n$. Here $y=t_{2k+1}(1)n$, thus for any $x\in \Z^*_ p\setminus\{t_{2k+1}(1)n\}$, we have $f_1(x)+f_2(t_{2k+1}(1)n-x)< K_1$. By \textbf{Case 1}, we only need to consider $t_{2k+1}(1)n\not= 0 $. Similarly we have
\begin{equation*}\label{E2k+1n2}
K_1<f_1(t_{2k+1}(1)n)+f_2(t_{2k+1}(1)n).
\end{equation*}
This completes the induction step on $k$.

Notice that $t_{k}(1)=(1- (-2)^{k+1})/3$ is the explicit formula for $t_{k}(1)$. Also, for $r,s\geq 0$, we have 
\begin{equation*}
2\cdot\frac{1- (-2)^{2r}}{3}+ 2\cdot\frac{1- (-2)^{2s+1}}{3}\equiv 1\pmod p  
\end{equation*}
is equivalent to
\begin{equation*}
2\cdot4^{r} -4^{s+1}\equiv 1\pmod p.
\end{equation*}

Since $p\geq 5$, the cyclic group $\langle  4\rangle$ is a subgroup of $\mathbb F_p^\times$. Therefore there exists $r>0$ such that
\begin{equation*}
4^{r} \equiv 1\pmod p.    
\end{equation*}
Let $s=r-1$. Then $2\cdot4^{r} -4^{s+1}\equiv 1\pmod p$.
It follows that
\begin{equation*}
(2\cdot\frac{1- (-2)^{2r}}{3}+ 2\cdot\frac{1- (-2)^{2r-1}}{3})n= n.
\end{equation*}
Now we either obtain a contradiction by \textbf{Case 1} or \textbf{Case 1'}, or by inequalities (\ref{E2k+1n}) and (\ref{E2kn}) we have
\begin{equation*}
K_1<f_1(t_{2r-1}(1)n)+f_2(t_{2r-1}(1)n),\ t_{2r-1}(1)n\neq 0,    
\end{equation*}
\begin{equation*}
  K_2<f_3(t_{2r-2}(1)n)+f_4(t_{2r-2}(1)n),\  t_{2r-2}(1)n\neq 0  
\end{equation*}
and
\begin{equation*}
    (t_{2r-1}(1)+t_{2r-1}(1)+t_{2r-2}(1)+t_{2r-2}(1))n= n,
\end{equation*}
which is also a contradiction.

Next, consider $q=p^k$ where $k>1$. Define $g_1,g_2,g_3,g_4$ over $\Z_p^*$ by
\begin{equation*}
  g_i(x)=p^{1-k}\sum_{a\equiv x\pmod p}f_i(a).   
\end{equation*}
For any $n\in\Z_{p^k}$, by the case $q=p$, there exist $x_1,x_2,x_3,x_4\in\Z_p^*$ such that $ n\equiv x_1+x_2+x_3+x_4 \pmod p$ and 
\begin{equation*}
    g_1(x_1)+g_2(x_2)\geq\E_{x\in\Z_p^*}(g_1(x)+g_2(x))=K_1,
\end{equation*}
\begin{equation*}
 g_3(x_3)+g_4(x_4)\geq\E_{x\in\Z_p^*}(g_3(x)+g_4(x))=K_2 .   
\end{equation*}

Let $n'=(n-x_1-x_2-x_3-x_4)/p\in\Z_{p^{k-1}}$, define $h_1,h_2,h_3,h_4$ over $\Z_{p^{k-1}}$ by $h_i(y)=f_i(x_i+yp)$, $ i=1,2,3,4$. Notice that for all $z_1\in \Z_{p^{k-1}}$ we have 
\begin{align*}
 \E_{y\in\Z_{p^{k-1}}}(h_1(y)+h_2(z_1-y))
= &\E_{y\in\Z_{p^{k-1}}}(h_1(y)+h_2(y))\\
= &\E_{y\in\Z_{p^{k-1}}}(f_1(x_1+yp)+f_2(x_2+yp))\\
= &p^{1-k}(\sum_{a\equiv x_1\pmod p}f_1(a)+\sum_{a\equiv x_2\pmod p}f_2(a))\\
=&g_1(x_1)+g_2(x_2) 
\geq K_1.
\end{align*}
Similarly we have $\E_{y\in\Z_{p^{k-1}}}(h_3(y)+h_4(z_2-y))=g_3(x_3)+g_4(x_4)\geq K_2$ for all $z_2\in \Z_{p^{k-1}}$.

Let $z_1+z_2\equiv n'\pmod {p^{k-1}}$, then there exist $y_1,y_2,y_3,y_4\in \Z_{p^{k-1}} $ such that $n'\equiv y_1+y_2+y_3+y_4\pmod {p^{k-1}}$ and 
\begin{align*}
h_1(y_1)+h_2(y_2)\geq\E_{y\in\Z_{p^{k-1}}}(h_1(y)+h_2(y))=K_1,\\
h_3(y_3)+h_4(y_4)\geq\E_{y\in\Z_{p^{k-1}}}(h_3(y)+h_4(y))=K_2,
\end{align*}
i.e.,
\begin{align*}
f_1(x_1+y_1p)+f_2(x_2+y_2p)\geq K_1,\\
f_3(x_3+y_3p)+f_4(x_4+y_4p)\geq K_2.
\end{align*}

Finally, consider $q=q_1q_2$ where $q_ 1$ and $q _2$ are coprime. Suppose that Theorem \ref{Theorem-2} holds for two coprime integers $q_1$ and $q_2$. We now show that it then holds for their product $q = q_1 q_2$ as well.

Consider $\Z_q$ as $\Z_{q_1}\oplus\Z_{q_2}$ and function $g_1,g_2,g_3,g_4$ over $\Z_{q_1}^*$by $g_i(x)=\frac{1}{\varphi(q_2)}\sum_{y\in\Z_{q_2}^*}f_i((x,y))$.
Then by the induction hypothesis, for any $n = (n_1, n_ 2 ) \in \Z_{q_1}\oplus\Z_{q_2}$, there exist $x_1,x_2,x_3,x_4\in\Z_{q_1}^*$ such that $n_1=x_1+x_2+x_3+x_4$ and 
\begin{align*}
g_1(x_1)+g_2(x_2)\geq\E_{x\in\Z_{q_1}^*}(g_1(x)+g_2(x))=K_1,\\ g_3(x_3)+g_4(x_4)\geq\E_{x\in\Z_{q_1}^*}(g_3(x)+g_4(x))=K_2,
\end{align*}
i.e.,
\begin{align*}
\frac{1}{\varphi(q_2)}\sum_{y\in\Z_{q_2}^*}f_1((x_1,y))+\frac{1}{\varphi(q_2)}\sum_{y\in\Z_{q_2}^*}f_2((x_2,y))\geq K_1,\\
\frac{1}{\varphi(q_2)}\sum_{y\in\Z_{q_2}^*}f_3((x_3,y))+\frac{1}{\varphi(q_2)}\sum_{y\in\Z_{q_2}^*}f_4((x_4,y))\geq K_2.
\end{align*}

Deﬁne functions $ h_1,h_2,h_3,h_4 $ over $\Z_{q_2}^*$ by $h_i(y)=f_i((x_i,y))$. Therefore 
\begin{align*}
\frac{1}{\varphi(q_2)}\sum_{y\in\Z_{q_2}^*}f_i((x_i,y))= \E_{y\in\Z_{q_2}^*}h_i(y).
\end{align*}

Applying the induction hypothesis again, there exist $y_1,y_2,y_3,y_4\in\Z_{q_2}^*$ such that $n_2=y_1+y_2+y_3+y_4$ and 
\begin{align*}
h_1(y_1)+h_2(y_2)\geq\E_{y\in\Z_{q_2}^*}(h_1(y)+h_2(y)) ,\\
h_3(y_3)+h_4(y_4)\geq\E_{y\in\Z_{q_2}^*}(h_3(y)+h_4(y)).
\end{align*}

Therefore,
\begin{gather*}
(n_1,n_2)=(x_1,y_1)+(x_2,y_2)+(x_3,y_3)+(x_4,y_4),\\
f_1((x_1,y_1))+f_2((x_2,y_2))\geq K_1,\\ 
f_3((x_3,y_3))+f_4((x_4,y_4))\geq K_2.
\end{gather*}

This finishes the proof by induction.
\end{proof} 

\begin{corollary}\label{coro1}
Let $q$ be an odd squarefree integer. Let $f_i:\Z_q^*\rightarrow \left[ 0,1 \right]$, $i=1,2,3,4$. Suppose that $\E_{x\in\Z_q^*}(f_1(x)+f_2(x))> 1$ and $\E_{x\in\Z_q^*}(f_3(x)+f_4(x))> 1$.    
Then for any integer $n$, there exists $x _1 ,\cdots, x_4\in\Z_q^*$ such that $n\equiv\sum_{i=1}^4 x_i \pmod q$, $\sum_{i=1}^4 f_i(x_i)> 2$ and $f_i(x_i)\neq 0$ for $i=1,2,3,4$.
\end{corollary}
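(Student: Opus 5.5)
The plan is to derive Corollary \ref{coro1} from Theorem \ref{Theorem-2}, with $K_1=\E_{x\in\Z_q^*}(f_1(x)+f_2(x))$ and $K_2=\E_{x\in\Z_q^*}(f_3(x)+f_4(x))$. Two things need care: the positivity/strictness conclusions, and the fact that $q$ may be divisible by $3$, which Theorem \ref{Theorem-2} excludes.

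\textbf{Step 1: the conclusion follows from the form of Theorem \ref{Theorem-2}.} Suppose $x_1,\dots,x_4\in\Z_q^*$ satisfy $n\equiv x_1+x_2+x_3+x_4 \pmod q$, $f_1(x_1)+f_2(x_2)\ge K_1$ and $f_3(x_3)+f_4(x_4)\ge K_2$. Since $K_1,K_2>1$, we get $\sum_i f_i(x_i)\ge K_1+K_2>2$. Since every $f_i$ takes values in $[0,1]$, we have $f_1(x_1)\ge K_1-f_2(x_2)\ge K_1-1>0$, and likewise for $f_2,f_3,f_4$. So all $f_i(x_i)\neq 0$. Hence it suffices to know the conclusion of Theorem \ref{Theorem-2} for every odd squarefree $q$, including those divisible by $3$.

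\textbf{Step 2: the prime $3$.} I will check that the statement of Theorem \ref{Theorem-2} holds for $q=3$ with arbitrary real $f_i$. In $\Z_3^*=\{1,2\}$, the pairs $(1,2)$ and $(2,1)$ both have sum $0$. Together they satisfy $[f_1(1)+f_2(2)]+[f_1(2)+f_2(1)]=2K_1$, so one of them has value $\ge K_1$. The pairs $(1,1)$ and $(2,2)$ have sums $2$ and $1$, and again their values total $2K_1$, so one of them has value $\ge K_1$. Thus the set $A_1$ of residues $x_1+x_2$ realized with $f_1(x_1)+f_2(x_2)\ge K_1$ contains $0$ and some $a\in\{1,2\}$. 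Similarly $A_2\supseteq\{0,b\}$ with $b\neq0$. Then $A_1+A_2\supseteq\{0,a,b,a+b\}$, which is all of $\Z_3$: if $a=b$ we get $\{0,a,2a\}$, and if $a\neq b$ we get $\{0,1,2\}$. So every $n\in\Z_3$ is represented.

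\textbf{Step 3: combine via CRT.} The final part of the proof of Theorem \ref{Theorem-2} shows that the statement for coprime $q_1,q_2$ implies it for $q_1q_2$, using only the statement itself (averaging to get $g_i$, then restricting to get $h_i$). I apply it with $q_1=3$ (Step 2) and $q_2=q/3$, which is coprime to $6$ since $q$ is odd and squarefree, so Theorem \ref{Theorem-2} covers it. If $3\nmid q$, Theorem \ref{Theorem-2} applies directly. Step 1 then finishes the proof.

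The only real obstacle is the factor $3$; the pairing argument of Step 2 handles it cleanly. I would double-check that the averaged functions in the CRT step preserve the means $K_1,K_2$. They do, because averaging over $\Z_{q_2}^*$ commutes with averaging over $\Z_q^*$.
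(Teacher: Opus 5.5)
Your proof is correct, but it handles the prime $3$ differently from the paper. The paper also treats $3\nmid q$ as a direct consequence of Theorem~\ref{Theorem-2}; your Step~1 spells out the ``simple consequence'' the paper leaves implicit.

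For $q=3$, the paper proves only the corollary's weaker conclusion ($\sum_i f_i(x_i)>2$ with every $f_i(x_i)\neq 0$). It does so by a case analysis over $n\in\Z_3$. When all $f_i(j)\neq 0$, it bounds the maximum over admissible $4$-tuples by an average. It uses $f_i\le 1$ to handle the subcases where some $f_i(j)=0$. Its Case~4 then redoes the CRT gluing by hand, with this weaker conclusion at the $3$-component.

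Your pairing argument instead proves the full pairwise conclusion of Theorem~\ref{Theorem-2} at $q=3$, for arbitrary real $f_i$. The key observation is that the residues $x_1+x_2$ attainable with $f_1(x_1)+f_2(x_2)\ge K_1$ always contain $0$ and some nonzero class, and two such subsets of $\Z_3$ always sum to $\Z_3$.

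This gives a shorter and more uniform proof. The prime $3$ slots directly into the existing multiplicativity step of Theorem~\ref{Theorem-2', '}, and the hypothesis $f_i\in[0,1]$ is used only once, in Step~1. As a by-product, you show that the restriction $(q,6)=1$ in Theorem~\ref{Theorem-2} is unnecessary for odd squarefree $q$. The paper's route is more laborious and targets only the weaker statement that the transference argument needs.
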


\begin{proof}
Our proof is based on a case-by-case analysis. If $3 \nmid q$, then Corollary \ref{coro1} is a simple consequence of Theorem \ref{Theorem-2}. Therefore, we now consider $3 \mid q$.

\textbf{Case 1.} Assume $q = 3$, $n=0$.
If $f_i(j) \neq 0$ for all $i\in\{1,2,3,4\}$, $j\in\Z_3^*$, we have 
\begin{align*}
& \ \  \ \ \max \left\{ \sum_{i=1}^4 f_i(x_i) : x_1,\dots,x
_4\in\Z_3^* \ \text{and}  \ \sum_{i=1}^4 x_i \equiv 0 \pmod{3}\right\}\\
&= \max\{f_1(1)+f_2(1)+f_3(2)+f_4(2),f_1(1)+f_2(2)+f_3(1)+f_4(2),f_1(2)+f_2(1)+f_3(1)+f_4(2),\\
& \ \ \ \ \ \ \ f_1(1)+f_2(2)+f_3(2)+f_4(1),f_1(2)+f_2(1)+f_3(2)+f_4(1),f_1(2)+f_2(2)+f_3(1)+f_4(1)\}\\
&\geq \frac{1}{6}(3f_1(1)+3f_1(2)+3f_2(1)+3f_2(2)+3f_3(1)+3f_3(2)+3f_4(1)+3f_4(2))\\
&=\frac{f_1(1)+f_1(2)}{2}+ \frac{f_2(1)+f_2(2)}{2}+ \frac{f_3(1)+f_3(2)}{2}+ \frac{f_4(1)+f_4(2)}{2}\\
&>2.
\end{align*}

If there exists a $f_i(j) = 0$, without loss of generality we let $i=1,j=1$, then $f_1(2)+f_2(1)+f_2(2)>2$ and $ f_1(2)f_2(1)f_2(2)\neq0$. Then we have
\begin{align*}
&\max\{f_1(2)+f_2(1)+f_3(1)+f_4(2),f_1(2)+f_2(1)+f_3(2)+f_4(1)\}\\
&>1+ \max\{f_3(1)+f_4(2),f_3(2)+f_4(1)\}\\
&>2,        
\end{align*}
where $\max\{f_3(1)+f_4(2),f_3(2)+f_4(1)\}>1$ also implies that the larger of the two sums has both summands non-zero.

\textbf{Case 2.} Assume $q = 3$, $n=1$.
Suppose $f_i(j) \neq 0$ for all $i\in\{1,2,3,4\}$, $j\in\Z_3^*$. If $f_1(2)+f_2(2)+f_3(2)+f_4(2)\leq2$ then $f_1(1)+f_2(1)+f_3(1)+f_4(1)>2$; the latter inequality is exactly what we need. Hence we assume $f_1(2)+f_2(2)+f_3(2)+f_4(2)>2$. Now we have 
\begin{align*}
& \ \  \ \ \max \left\{ \sum_{i=1}^4 f_i(x_i) : x_1,\dots,x
_4\in\Z_3^* \ \text{and}  \ \sum_{i=1}^4 x_i \equiv 1 \pmod{3}\right\}\\
&= \max\{f_1(1)+f_2(1)+f_3(1)+f_4(1),f_1(1)+f_2(2)+f_3(2)+f_4(2),f_1(2)+f_2(1)+f_3(2)+f_4(2),\\
& \ \ \ \ \ \ \ f_1(2)+f_2(2)+f_3(1)+f_4(2),f_1(2)+f_2(2)+f_3(2)+f_4(1)\}\\
&\geq \frac{1}{5}(2f_1(1)+3f_1(2)+2f_2(1)+3f_2(2)+2f_3(1)+3f_3(2)+2f_4(1)+3f_4(2))\\
&>\frac{8}{5}+\frac{1}{5}(f_1(2)+f_2(2)+f_3(2)+f_4(2))\\
&>2.        
\end{align*}

If there exists a $f_i(1) = 0$, without loss of generality we let $i=1$, then $f_1(2)+f_2(1)+f_2(2)>2$ and $ f_1(2)f_2(1)f_2(2)\neq0$. Then we have
\begin{align*}
&\max\{f_1(2)+f_2(2)+f_3(1)+f_4(2),f_1(2)+f_2(2)+f_3(2)+f_4(1)\}\\
&>1+ \max\{f_3(1)+f_4(2),f_3(2)+f_4(1)\}\\
&>2,        
\end{align*}
where $\max\{f_3(1)+f_4(2),f_3(2)+f_4(1)\}>1$ also implies that the larger of the two sums has both summands non-zero.

If there exists a $f_i(2) = 0$, without loss of generality we let $i=1$, then $f_1(1)+f_2(1)+f_2(2)>2$ and $ f_1(1)f_2(1)f_2(2)\neq0$. Then we have
\begin{align*}
& \max\{f_1(1)+f_2(1)+f_3(1)+f_4(1),f_1(1)+f_2(2)+f_3(2)+f_4(2)\}\\
&>1+ \max\{f_3(1)+f_4(1),f_3(2)+f_4(2)\}\\
&>2,        
\end{align*}
where $\max\{f_3(1)+f_4(1),f_3(2)+f_4(2)\}>1$ also implies that the larger of the two sums has both summands non-zero.

\textbf{Case 3.} Assume $q = 3$, $n=2$.
Suppose $f_i(j) \neq 0$ for all $i\in\{1,2,3,4\}$, $j\in\Z_3^*$. If $f_1(1)+f_2(1)+f_3(1)+f_4(1)\leq2$ then $f_1(2)+f_2(2)+f_3(2)+f_4(2)>2$; the latter inequality is exactly what we need. Hence we assume $f_1(1)+f_2(1)+f_3(1)+f_4(1)>2$. Now we have 
\begin{align*}
& \ \  \ \ \max \left\{ \sum_{i=1}^4 f_i(x_i) : x_1,\dots,x
_4\in\Z_3^* \ \text{and}  \ \sum_{i=1}^4 x_i \equiv 2 \pmod{3}\right\}\\
&= \max\{f_1(1)+f_2(1)+f_3(1)+f_4(2),f_1(1)+f_2(1)+f_3(2)+f_4(1),f_1(1)+f_2(2)+f_3(1)+f_4(1),\\
& \ \ \ \ \ \ \ f_1(2)+f_2(1)+f_3(1)+f_4(1),f_1(2)+f_2(2)+f_3(2)+f_4(2)\}\\
&\geq \frac{1}{5}(3f_1(1)+2f_1(2)+3f_2(1)+2f_2(2)+3f_3(1)+2f_3(2)+3f_4(1)+2f_4(2))\\
&>\frac{8}{5}+\frac{1}{5}(f_1(1)+f_2(1)+f_3(1)+f_4(1))\\
&>2.        
\end{align*}

If there exist a $f_i(1) = 0$, without loss of generality we let $i=1$, then $f_1(2)+f_2(1)+f_2(2)>2$ and $ f_1(2)f_2(1)f_2(2)\neq0$. Then we have
\begin{align*}
&\max\{f_1(2)+f_2(1)+f_3(1)+f_4(1),f_1(2)+f_2(2)+f_3(2)+f_4(2)\}\\
&>1+ \max\{f_3(1)+f_4(1),f_3(2)+f_4(2)\}\\
&>2,        
\end{align*}
where $\max\{f_3(1)+f_4(1),f_3(2)+f_4(2)\}>1$ also implies that the larger of the two sums has both summands non-zero.

If there exist a $f_i(2) = 0$, without loss of generality we let $i=1$, then $f_1(1)+f_2(1)+f_2(2)>2$ and $ f_1(1)f_2(1)f_2(2)\neq0$. Then we have
\begin{align*}
& \max\{f_1(1)+f_2(1)+f_3(1)+f_4(2),f_1(1)+f_2(1)+f_3(2)+f_4(1)\}\\
&>1+ \max\{f_3(1)+f_4(2),f_3(2)+f_4(1)\}\\
&>2,       
\end{align*}
where $\max\{f_3(1)+f_4(2),f_3(2)+f_4(1)\}>1$ also implies that the larger of the two sums has both summands non-zero.

\textbf{Case 4.} Assume $q = 3q'$, where $3\nmid q'$. By Theorem \ref{Theorem-2}, for any $n=(n_1,n_2)\in \Z_{q'} \oplus\Z_3$ there exist $x_1, x_2, x_3,x_4 \in\Z _{q'}^*$ such that $n_1 = x_1+ x_2+ x_3+x_4 $ and
\begin{align*}
  \E_{y\in\Z_3^*}(f_1((x_1,y))+f_2((x_2,y)))>1, \\
  \E_{y\in\Z_3^*}(f_3((x_3,y))+f_4((x_4,y)))>1  .
\end{align*}

Combining \textbf{Case 1-3}, there exist $y_1,y_2,y_3,y_4 \in\Z _3 ^*$ such that $n_ 2 =y_1+y_2+y_3+y_4 $,
$$
f_1((x_1,y_1))+f_2((x_2,y_2))+f_3((x_3,y_3))+f_4((x_4,y_4))>2,
$$
    and
$$
f_1((x_1,y_1))f_2((x_2,y_2))f_3((x_3,y_3))f_4((x_4,y_4))\neq 0.
$$

\end{proof}

In particular, let $f_i$ be the characteristic functions of $A_i$, we have the following generalization of Corollary 1.3 in \cite{Gao1}.

\begin{corollary}
Let $q$ be an odd squarefree integer. Let $A_1, A_2, A_3, A_4$ be four subsets of $\Z_q^*$ with $|A_1|+|A_2|>\varphi (q)$ and $|A_3|+|A_4|>\varphi (q)$. Then $A_1+A_2+A_3+A_4=\Z _q$.
\end{corollary}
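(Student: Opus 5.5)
This follows by specializing Corollary \ref{coro1} to indicator functions. Put $f_i=\mathbf{1}_{A_i}$ for $i=1,2,3,4$, viewed as functions $\Z_q^*\to\{0,1\}\subset[0,1]$. Then
\begin{equation*}
\E_{x\in\Z_q^*}(f_1(x)+f_2(x))=\frac{|A_1|+|A_2|}{\varphi(q)}>1,
\end{equation*}
and in the same way $\E_{x\in\Z_q^*}(f_3(x)+f_4(x))>1$. So all hypotheses of Corollary \ref{coro1} hold; $q$ is odd and squarefree, as assumed in the statement.

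Now fix $n\in\Z_q$ and choose any integer representative of it. Corollary \ref{coro1} gives $x_1,\dots,x_4\in\Z_q^*$ with $n\equiv x_1+x_2+x_3+x_4 \pmod q$ and $f_i(x_i)\neq 0$ for each $i$. Since $f_i$ is an indicator, $f_i(x_i)\neq0$ means exactly $x_i\in A_i$. Hence $n\in A_1+A_2+A_3+A_4$.

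As $n$ was arbitrary, $A_1+A_2+A_3+A_4=\Z_q$. The inequality $\sum_i f_i(x_i)>2$ from Corollary \ref{coro1} is not needed here; the non-vanishing condition alone gives membership.

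The only point to watch is that we use the non-vanishing conclusion of Corollary \ref{coro1}, not just Theorem \ref{Theorem-2}. Theorem \ref{Theorem-2} would only give $f_1(x_1)+f_2(x_2)\ge 1$, which for indicators does not force both $x_1\in A_1$ and $x_2\in A_2$. It also requires $(q,6)=1$, whereas here $3\mid q$ is allowed.
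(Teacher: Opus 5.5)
Your proof is correct and follows the paper's route exactly: set $f_i=\mathbf{1}_{A_i}$ in Corollary \ref{coro1} and read off $x_i\in A_i$ from the non-vanishing conclusion. One small correction to your closing remark: Theorem \ref{Theorem-2} actually gives $f_1(x_1)+f_2(x_2)\ge (|A_1|+|A_2|)/\varphi(q)>1$, which for $\{0,1\}$-valued functions does force $f_1(x_1)=f_2(x_2)=1$, so the only real reason to go through Corollary \ref{coro1} is the case $3\mid q$ excluded by the hypothesis $(q,6)=1$.
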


\section{Proof of Theorem \ref{Theorem-1}}
Let 
\begin{equation*}
	\kappa=\min\{10^{-4}(\underline{\delta}(P_1)+\underline{\delta}(P_2)-1),10^{-4}(\underline{\delta}(P_3)+\underline{\delta}(P_4)-1)\}
\end{equation*}
and
\begin{equation*}
	\alpha_i=\underline{\delta}(P_i)/(1+2\kappa)
\end{equation*}
for $i\in\{1,2,3,4\}$. Let $n$ be a sufficiently large positive integer. By the definition of $\underline{\delta}(P_i)$, we have
\begin{equation}\label{lower bound for P_i (1)}
	|P_{i}\cap [1,n/2]|\geq (1+\kappa)\alpha_i\dfrac{n/2}{\log n}.
\end{equation}
Let $w(n)=\frac{1}{4}\log \log n$ and
\begin{equation*}
	W=\prod_{p \leq w(n)}p.
\end{equation*}
Clearly $W\leq \log n$. By \eqref{lower bound for P_i (1)}, we have
\begin{equation}\label{lower bound for P_i (2)}
	\begin{aligned}
		\sum_{\substack{x\leq n/2\\ (x,W)=1}}\mathbf{1}_{P_{i}}(x)\log x &\geq \sum_{n^{\frac{1}{1+\kappa/2}}\leq x\leq n/2}\mathbf{1}_{P_{i}}(x)\log x\\
		&\geq \dfrac{\log n}{1+\kappa/2}\bigg( \dfrac{(1+\kappa)\alpha_i(n/2)}{\log n}-n^{\frac{1}{1+\kappa/2}} \bigg)\\
		&\geq \dfrac{1}{2}\alpha_i n.
	\end{aligned}
\end{equation}
For $b\in\mathbb{Z}_{W}^{\ast}$, define
\begin{equation*}
	f_{i}(b)=\max \bigg\{ 0,\dfrac{2\varphi(W)}{n}\sum_{\substack{x\leq n/2\\ x\equiv b \pmod{W}}}\mathbf{1}_{P_{i}}(x)\log x-3\kappa \bigg\}.
\end{equation*}
It follows from Siegel--Walfisz theorem that $f_{i}(b)\in [0,1]$ for all $b\in\mathbb{Z}_{W}^{\ast}$. Also, by \eqref{lower bound for P_i (2)}, we have
\begin{align*}
	\sum_{b\in\mathbb{Z}_{W}^{\ast}}(f_1(b)+f_2(b))&\geq \dfrac{2\varphi(W)}{n}\sum_{b\in\mathbb{Z}_{W}^{\ast}}\sum_{\substack{x\leq n/2\\ x\equiv b \pmod{W}}}(\mathbf{1}_{P_{1}}(x)+\mathbf{1}_{P_{2}}(x))\log x-6\kappa\varphi(W)\\
	&\geq (\alpha_1+\alpha_2-6\kappa)\varphi(W)>\varphi(W).
\end{align*}
Similarly, we have
\begin{equation*}
	\sum_{b\in\mathbb{Z}_{W}^{\ast}}(f_3(b)+f_4(b))>\varphi(W).
\end{equation*}
It follows from Corollary \ref{coro1} that there exist $b_1, b_2, b_3, b_4\in \mathbb{Z}_{W}^{\ast}$ such that $n\equiv b_1+b_2+b_3+b_4\pmod{W}$, $f_1(b_1)+f_2(b_2)+f_3(b_3)+f_4(b_4)>2$ and $f_i(b_i)>0$ for $1\leq i\leq 4$.  And without loss of generality, we may assume that $1\leq b_1, b_2, b_3, b_4<W$.

Let $N$ be a prime and $(1+\kappa)n/W\leq N\leq (1+2\kappa)n/W$. By the Prime Number Theorem, we know that such $N$ always exists for sufficiently large $n$. Let $n'=(n-b_1-b_2-b_3-b_4)/W$ and
\begin{equation*}
	A_{i}=\{ x:Wx+b_i\in P_i\cap [1,n/2] \}.
\end{equation*}
It suffices to show that $n'\in A_1+A_2+A_3+A_4$. Define
\begin{equation*}
	\lambda_{b, W, N}(x) =
	\begin{cases} 
		\varphi(W) \log(Wx + b)/WN & \text{if } x \leq N \text{ and } Wx + b \text{ is prime}, \\
		0 & \text{otherwise}.
	\end{cases}
\end{equation*}
and
\begin{equation*}
	\alpha'_i=\sum_{x}\mathbf{1}_{A_i}(x)\lambda_{b_i, W, N}(x).
\end{equation*}
Then we have
\begin{equation}\label{lower bound (1)}
	\begin{aligned}
		\alpha'_i&=\dfrac{\varphi(W)}{WN}\sum_{x}\mathbf{1}_{A_i}(x)\log (Wx+b_i)=\dfrac{\varphi(W)}{WN}\sum_{\substack{x\leq n/2\\ x\equiv b_i \pmod{W}}}\mathbf{1}_{P_i}(x)\log x\\
		&=\dfrac{n/2}{WN}(f_i(b_i)+3\kappa)\geq \dfrac{f_i(b_i)+3\kappa}{2(1+2\kappa)}\geq \kappa 
	\end{aligned}
\end{equation}
for $1\leq i\leq 4$ and
\begin{equation}\label{lower bound (2)}
	\alpha'_1+\alpha'_2+\alpha'_3+\alpha'_4\geq \dfrac{1}{2(1+2\kappa)}(f_1(b_1)+f_2(b_2)+f_3(b_3)+f_4(b_4)+12\kappa)>\dfrac{1+6\kappa}{1+2\kappa}\geq 1+3\kappa.
\end{equation}
Below we consider $A_1,A_2,A_3,A_4$ as subsets of $\mathbb{Z}_N$. Since $A_1,A_2,A_3,A_4\subseteq [1,n/2W]$ and $N\geq (1+\kappa)n/W$, there is no nonzero integer $k$ such that $kN+n'=x_1+x_2+x_3+x_4$, where $x_i\in A_i$. Therefore $n'\in A_1+A_2+A_3+A_4$ in $\mathbb{Z}_{N}$ implies that $n'\in A_1+A_2+A_3+A_4$ in $\mathbb{Z}$. The rest of the analysis is done in $\mathbb{Z}_N$.

Let $\mu_i(x)=\lambda_{b_i, W, N}(x)$ and $a_i(x)=\mathbf{1}_{A_i}(x)\mu_i(x)$. Suppose that $\delta, \varepsilon > 0$ are two real numbers which will be chosen later. Let
\begin{equation*}
	R_i = \{ r \in \mathbb{Z}_N : |\tilde{a}_i(r)| \geq \delta \}
\end{equation*}
and
\begin{equation*}
	B_i = \{ x \in \mathbb{Z}_N : \| xr/N \| \leq \varepsilon \text{ for all } r \in R_i \},
\end{equation*}
where $\|x\| = \min_{z \in \mathbb{Z}} |x - z|$. Let $\beta_i = \mathbf{1}_{B_i} / |B_i|$ and $a_i' = a_i \ast \beta_i \ast \beta_i$.

Next, we prove that the functions $a_1\ast a_2\ast a_3\ast a_4$ and $a_1'\ast a_2'\ast a_3'\ast a_4'$ are in a certain sense close to each other.
\begin{lemma}\label{Approximation of convolution}
	We have the estimate
	\begin{equation*}
		\left| \sum_{\substack{x_1,x_2,x_3,x_4 \in \mathbb{Z}_N\\ x_1+x_2+x_3+x_4=n'}} \prod_{i=1}^{4}a_i'(x_i) - \sum_{\substack{x_1,x_2,x_3,x_4 \in \mathbb{Z}_N\\ x_1+x_2+x_3+x_4=n'}} \prod_{i=1}^{4}a_i(x_i) \right| \leq \frac{C_1}{N} (\epsilon^2 \delta^{-5/2} + \delta^{1/5}).
	\end{equation*}
\end{lemma}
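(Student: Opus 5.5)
We work in Fourier space. By orthogonality, for any functions $g_1,\dots,g_4$ on $\mathbb{Z}_N$,
\begin{equation*}
\sum_{x_1+x_2+x_3+x_4=n'}\prod_{i=1}^4 g_i(x_i)=\frac1N\sum_{r\in\mathbb{Z}_N}\prod_{i=1}^4\tilde g_i(r)\,e(n'r/N).
\end{equation*}
Since $\tilde\beta_i$ is real (because $B_i=-B_i$), we have $\tilde a_i'(r)=\tilde a_i(r)\tilde\beta_i(r)^2$. The difference in the lemma is therefore bounded by
\begin{equation*}
\frac1N\sum_r\Bigl|\prod_{i=1}^4\tilde a_i(r)\Bigr|\,\Bigl|1-\prod_{i=1}^4\tilde\beta_i(r)^2\Bigr|.
\end{equation*}
We telescope $1-\prod_i\tilde\beta_i^2$ as a sum of four terms, each of the form $(1-\tilde\beta_j(r)^2)\prod_{i<j}\tilde\beta_i(r)^2$, and use $|\tilde\beta_i|\le 1$. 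It then suffices to bound, for each $j$,
\begin{equation*}
\sum_r |1-\tilde\beta_j(r)^2|\prod_{i=1}^4|\tilde a_i(r)|.
\end{equation*}

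\textbf{Step 1: frequencies $r\in R_j$.} For $x\in B_j$ we have $|e(-xr/N)-1|\le 2\pi\|xr/N\|\le 2\pi\varepsilon$, so $|1-\tilde\beta_j(r)|\le 2\pi\varepsilon$, and hence $|1-\tilde\beta_j(r)^2|\ll\varepsilon$. The contribution is then $\ll\varepsilon\,|R_j|\max_r\prod_i|\tilde a_i(r)|$. Using $|\tilde a_i(r)|\le\tilde a_i(0)\le\tilde\mu_i(0)\ll 1$, this is $\ll\varepsilon|R_j|$. To bound $|R_j|$ we use the restriction estimate for the majorant $\mu_j$ (Green's $\ell^{p}$ restriction for $W$-tricked primes, $2<p<3$): $\sum_r|\tilde a_j(r)|^{5/2}\ll 1$, so $|R_j|\ll\delta^{-5/2}$. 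Alternatively, we can keep the factor $|1-\tilde\beta_j|$, which by Cauchy--Schwarz and the averaging of $\beta_j*\beta_j$ gives $\varepsilon^2$ directly. I would aim for $\varepsilon^2\delta^{-5/2}$ by writing $1-\tilde\beta_j(r)=\mathbb{E}_{x\in B_j}(1-\cos(2\pi xr/N))$, using the evenness of $B_j$, together with $1-\cos\theta\le\theta^2/2$. This yields $|1-\tilde\beta_j(r)|\le 2\pi^2\varepsilon^2$, so the contribution is $\ll\varepsilon^2\delta^{-5/2}$.

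\textbf{Step 2: frequencies $r\notin R_j$.} Here $|\tilde a_j(r)|<\delta$ and $|1-\tilde\beta_j^2|\le 2$. We bound the sum by
\begin{equation*}
\sup_{r\notin R_j}|\tilde a_j(r)|^{\theta}\sum_r|\tilde a_j(r)|^{1-\theta}\prod_{i\ne j}|\tilde a_i(r)|.
\end{equation*}
We apply H\"older with the three factors $|\tilde a_i|$ ($i\ne j$) each in $\ell^{p}$ and $|\tilde a_j|^{1-\theta}$ in the complementary exponent. Choosing $p=5/2$ on the three factors (total $3\cdot 2/5=6/5$ exceeds $1$) leaves room: we take $|\tilde a_j|^{1-\theta}$ in $\ell^{s}$ with $1/s=-1/5$. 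That is not allowed, so instead we use exponent $p=10/3$ for all four factors. More simply, we use $\sum_r\prod_{i=1}^4|\tilde a_i|^{?}$ with $\prod_i|\tilde a_i|\le\max$ and H\"older in $\ell^{4\cdot(4/5)}$. Concretely, we write
\begin{equation*}
|\tilde a_j|=|\tilde a_j|^{1/5}|\tilde a_j|^{4/5}\le\delta^{1/5}|\tilde a_j|^{4/5},
\end{equation*}
and then apply H\"older with the exponents $(16/5)\cdot$ chosen so that
\begin{equation*}
\sum_r|\tilde a_j|^{4/5}\prod_{i\ne j}|\tilde a_i|\le\|\tilde a_j\|_{p}^{4/5}\prod_{i\ne j}\|\tilde a_i\|_{p},\qquad p=\tfrac{19}{5}>2.
\end{equation*}
The restriction estimate bounds each norm by $O(1)$, giving $\ll\delta^{1/5}$.

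\textbf{Main obstacle.} The key input is the restriction estimate $\sum_r|\tilde a_i(r)|^{p}\ll_p 1$ for $p>2$ uniformly in $N$, valid since $|a_i|\le\mu_i$ and $\mu_i$ is the $W$-tricked, normalised von Mangoldt majorant (this is Green's Fourier restriction for primes, or the enveloping-sieve version with $\lambda_{b,W,N}$). I would cite it in the form used in \cite{LP,Shao}. Combining Steps 1 and 2 over the four choices of $j$ and dividing by $N$ gives the claimed bound with an absolute constant $C_1$.
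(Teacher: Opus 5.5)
Your argument is correct and is essentially the paper's: Fourier expansion, the Bohr-set bound $|1-\tilde\beta_j(r)|\ll\varepsilon^2$ on the large spectrum with $|R_j|\ll\delta^{-5/2}$ from Green's restriction estimate at exponent $5/2$, and H\"older plus restriction (Green's Lemma 6.6) on the small spectrum. The differences are cosmetic: you telescope $1-\prod_i\tilde\beta_i^2$ and split each term on $R_j$, whereas the paper splits on $R=R_1\cap\cdots\cap R_4$ and applies H\"older to $|\prod_i\tilde a_i|^{4/5}$ with $p=16/5$ rather than your (valid) exponent $p=19/5$; you should delete the abandoned exponent attempts in Step 2 when writing it up.
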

\begin{proof}
	Note that
	\begin{align*}
		\sum_{\substack{x_1,x_2,x_3,x_4 \in \mathbb{Z}_N\\ x_1+x_2+x_3+x_4=n'}} \prod_{i=1}^{4}a_i(x_i)&=\dfrac{1}{N}\sum_{x_1,x_2,x_3,x_4 \in \mathbb{Z}_N}\prod_{i=1}^{4}a_i(x_i)\sum_{r\in\mathbb{Z}_N}e((n'-x_1-x_2-x_3-x_4)r/N)\\
		&=\dfrac{1}{N}\sum_{r\in\mathbb{Z}_N}\prod_{i=1}^{4}\tilde{a_i}(r)e(n'r/N)
	\end{align*}
	and
	\begin{align*}
		\sum_{\substack{x_1,x_2,x_3,x_4 \in \mathbb{Z}_N\\ x_1+x_2+x_3+x_4=n'}} \prod_{i=1}^{4}a_i'(x_i)&=\dfrac{1}{N}\sum_{x_1,x_2,x_3,x_4 \in \mathbb{Z}_N}\prod_{i=1}^{4}a_i'(x_i)\sum_{r\in\mathbb{Z}_N}e((n'-x_1-x_2-x_3-x_4)r/N)\\
		&=\dfrac{1}{N}\sum_{r\in\mathbb{Z}_N}\prod_{i=1}^{4}\tilde{a_i'}(r)e(n'r/N)=\dfrac{1}{N}\sum_{r\in\mathbb{Z}_N}\prod_{i=1}^{4}\tilde{a_i}(r)\tilde{\beta_i}(r)^{2}e(n'r/N).
	\end{align*}
	We have
	\begin{align*}
		&\left| \sum_{\substack{x_1,x_2,x_3,x_4 \in \mathbb{Z}_N\\ x_1+x_2+x_3+x_4=n'}} \prod_{i=1}^{4}a_i'(x_i) - \sum_{\substack{x_1,x_2,x_3,x_4 \in \mathbb{Z}_N\\ x_1+x_2+x_3+x_4=n'}} \prod_{i=1}^{4}a_i(x_i) \right|\\&=\dfrac{1}{N}\left|\sum_{r\in \mathbb{Z}_{N}}\bigg( \prod_{i=1}^{4}\tilde{a_i}(r) \bigg)\bigg( 1-\prod_{i=1}^{4}\tilde{\beta_i}^{2}(r) \bigg)e(n'r/N)\right|.
	\end{align*}
	By \cite[proof of Lemma 6.7]{Gre}, we know that
	\begin{equation*}
		|1-\tilde{\beta_i}(r)|\leq 16\varepsilon^{2}
	\end{equation*}
	for $r\in R_i$. Therefore, if $r\in R=R_1\cap R_2\cap R_3\cap R_4$, then
	\begin{equation*}
		\left|1-\prod_{i=1}^{4}\tilde{\beta_i}^{2}(r) \right|\leq C_2\varepsilon^2
	\end{equation*}
	for some constant $C_2>0$. Also, by \cite[proof of Proposition 6.4]{Gre}, we know that $|R_i|\leq C_3\delta^{-5/2}$ for some constant $C_3>0$.
	Note that $\left|\tilde{a_i}(r)\right|\leq \sum_{x\in\mathbb{Z}_N}a_i(x)\leq 1$. We have
	\begin{align*}
		\left|\sum_{r\in R}\bigg( \prod_{i=1}^{4}\tilde{a_i}(r) \bigg)\bigg( 1-\prod_{i=1}^{4}\tilde{\beta_i}^{2}(r) \bigg)e(n'r/N)\right|&\leq C_2\varepsilon^2\sum_{r\in R}\left|\tilde{a_1}(r)\tilde{a_2}(r)\tilde{a_3}(r)\tilde{a_4}(r)\right|\\
		&\leq C_2\varepsilon^2|R|\leq C_2C_3\varepsilon^2\delta^{-5/2}.
	\end{align*}
	By the H\"older's inequality and noting that $|\tilde{\beta_i}(r)|\leq \sum_{x\in \mathbb{Z}_{N}}\beta_i(x)=1$, we have
	\begin{align*}
		\left|\sum_{r\notin R}\bigg( \prod_{i=1}^{4}\tilde{a_i}(r) \bigg)\bigg( 1-\prod_{i=1}^{4}\tilde{\beta_i}^{2}(r) \bigg)e(n'r/N)\right|&\leq 2\sup_{r\notin R}\left|\prod_{i=1}^{4}\tilde{a_i}(r)\right|^{1/5}\sum_{r\notin R}\left|\prod_{i=1}^{4}\tilde{a_i}(r)\right|^{4/5}\\
		&\leq 2\delta^{1/5}\prod_{i=1}^{4}\bigg( \sum_{r\notin R}\left|\tilde{a_i}(r)\right|^{16/5} \bigg)^{1/4}.
	\end{align*}
	Applying \cite[Lemma 6.6]{Gre} with $p=16/5$, we have
	\begin{equation*}
		\left|\sum_{r\notin R}\bigg( \prod_{i=1}^{4}\tilde{a_i}(r) \bigg)\bigg( 1-\prod_{i=1}^{4}\tilde{\beta_i}^{2}(r) \bigg)e(n'r/N)\right|\leq C_4\delta^{1/5}.
	\end{equation*}
	This completes the proof.
\end{proof}

\begin{lemma}\label{upper bound for convolution}
	Suppose that $\varepsilon^{|R_i|} \geq C_{5} \log \log w / w$. Then for each $x \in \mathbb{Z}_N$,
	\begin{equation*}
		|a_i'(x)| \leq (1 + 2/C_{5})/N.
	\end{equation*}
\end{lemma}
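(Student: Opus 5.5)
This is Green's Lemma 6.3 from \cite{Gre}, adapted to the present normalisation. The approach is to bound $a_i'$ pointwise by the majorant $\mu_i$ convolved with $\beta_i\ast\beta_i$, and then use the pseudorandomness of $\mu_i$ in Fourier space.

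\textbf{Reduction to the majorant.} Since $0\le a_i\le \mu_i$ pointwise and $\beta_i\ge 0$, we have
\[
0\le a_i'(x)\le \mu_i\ast\beta_i\ast\beta_i(x).
\]
By Fourier inversion on $\mathbb{Z}_N$,
\[
\mu_i\ast\beta_i\ast\beta_i(x)=\frac1N\sum_{r\in\mathbb{Z}_N}\tilde\mu_i(r)\,\tilde\beta_i(r)^2\,e(xr/N).
\]

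\textbf{Splitting the Fourier sum.} The $r=0$ term is $\tilde\mu_i(0)/N$. The Siegel--Walfisz theorem, with $W\le\log n$ and $N\asymp n/W$, gives $\tilde\mu_i(0)=1+o(1)$. The terms $r\neq0$ are bounded by
\[
\frac1N\sup_{r\ne0}|\tilde\mu_i(r)|\sum_r|\tilde\beta_i(r)|^2.
\]
Two standard inputs control this. First, Parseval gives $\sum_r|\tilde\beta_i(r)|^2=N\sum_x\beta_i(x)^2=N/|B_i|$. Second, \cite[Lemma 6.2]{Gre} (the Bohr-set size bound) gives $|B_i|\ge \varepsilon^{|R_i|}N$. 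Together these bound the nonzero terms by
\[
\frac1N\cdot\sup_{r\ne0}|\tilde\mu_i(r)|\cdot \varepsilon^{-|R_i|}.
\]

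\textbf{Main obstacle: the uniform Fourier bound for $\mu_i$.} The key input is
\[
\sup_{r\ne0}|\tilde\mu_i(r)|\le 2\log\log w/w
\]
for $n$ large, which is \cite[Lemma 6.2]{Gre}, proved there via the $W$-trick and major/minor arc estimates. I would cite it in the present normalisation, checking that the factor $\varphi(W)/WN$ and the restriction $x\le N$ in $\lambda_{b,W,N}$ match Green's setup, so that the stated bound holds up to $1+o(1)$.

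\textbf{Conclusion.} Inserting the hypothesis $\varepsilon^{|R_i|}\ge C_5\log\log w/w$ makes the nonzero-frequency contribution at most
\[
\frac1N\cdot\frac{2\log\log w}{w}\cdot\frac{w}{C_5\log\log w}=\frac{2}{C_5N}.
\]
Adding the main term $(1+o(1))/N$ gives $|a_i'(x)|\le(1+2/C_5+o(1))/N$. The $o(1)$ is absorbed by using the slightly stronger constant available in Green's bound (e.g.\ $(2-\eta)\log\log w/w$ for large $n$), or by noting that $\tilde\mu_i(0)\le 1$ holds by Brun--Titchmarsh-type counting, since $N\ge(1+\kappa)n/W$. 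This gives the stated inequality.
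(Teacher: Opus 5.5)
Your argument is Green's proof of \cite[Lemma 6.3]{Gre}: dominate $a_i'$ by $\mu_i\ast\beta_i\ast\beta_i$, expand in Fourier, and combine $\tilde\mu_i(0)=1+o(1)$, Parseval, $|B_i|\ge\varepsilon^{|R_i|}N$ and the uniform bound on $\sup_{r\ne0}|\tilde\mu_i(r)|$; the paper invokes exactly this proof while omitting the details, so your proposal is correct and follows the same route. Only your first way of absorbing the $o(1)$ works, since the bound $2\log\log w/w$ has room to spare (the major-arc main terms are at most $1/\varphi(q)$ with $q>1$ coprime to $W$), whereas the second fails: $\tilde\mu_i(0)=\sum_{x\le N}\mu_i(x)$ is $1+o(1)$ rather than $\le 1$, Brun--Titchmarsh only gives $2+o(1)$, and $N\ge(1+\kappa)n/W$ constrains the support of $a_i$, not of $\mu_i$.
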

\begin{proof}
As the proof is the same as that of \cite[Lemma 6.3]{Gre}, we omit it here.
\end{proof}
\begin{lemma}\label{sumset}
	Suppose that $k\geq 2$ and $0 < \theta_1, \ldots, \theta_k \leq 1$ with $\theta_1 + \cdots + \theta_k > 1$. Let
	\begin{equation*}
		\theta = \min\{\theta_1, \ldots, \theta_k, (\theta_1 + \cdots + \theta_k - 1)/(3k - 5)\}.
	\end{equation*}
	Suppose that $N$ is a prime greater than $2\theta^{-2}$, and $X_1, \ldots, X_k$ are subsets of $\mathbb{Z}_N$ with $|X_i| \geq \theta_i N$. Then for any $n \in \mathbb{Z}_N$, we have
	\begin{equation*}
		|\{(x_1, x_2, \ldots, x_k) : x_i \in X_i, \; n = x_1 + x_2 + \dots + x_k\}|\geq \theta^{2k-3} N^{k-1}.
	\end{equation*}
\end{lemma}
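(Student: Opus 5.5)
Our plan is to prove Lemma \ref{sumset} by induction on $k$, using the Cauchy--Davenport theorem in $\mathbb{Z}_N$ to get the base case $k=2$. Write $r(n)=r_{X_1,\dots,X_k}(n)$ for the number of representations.

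\textbf{Base case $k=2$.} Here $3k-5=1$, so $\theta\le \theta_1+\theta_2-1$. The count $r(n)=|X_1\cap(n-X_2)|$ is at least $|X_1|+|X_2|-N\ge(\theta_1+\theta_2-1)N\ge\theta N$. This is exactly $\theta^{2k-3}N^{k-1}$ for $k=2$, so no hypothesis on $N$ is needed.

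\textbf{Inductive step $k\ge3$.} Order the sets so that $\theta_k$ is the smallest. We have
\[
r(n)=\sum_{x_k\in X_k} r_{X_1,\dots,X_{k-1}}(n-x_k),
\]
so it is enough to bound $r_{X_1,\dots,X_{k-1}}$ from below on many residues. We split into two cases.

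\emph{Case A: $\theta_1+\dots+\theta_{k-1}-1$ is not too small.} Suppose it is at least $\theta_k$ (roughly). Then the $(k-1)$-parameter quantity $\theta'=\min\{\theta_1,\dots,\theta_{k-1},(\sum_{i<k}\theta_i-1)/(3k-8)\}$ satisfies $\theta'\ge\theta$. We must check that $(\sum_{i<k}\theta_i-1)/(3k-8)\ge(\sum_i\theta_i-1)/(3k-5)$ whenever the omitted $\theta_k$ is small relative to the excess. Induction then gives at least $\theta^{2k-5}N^{k-2}$ representations for every residue, and summing over $x_k$ gives
\[
r(n)\ge \theta_kN\cdot\theta^{2k-5}N^{k-2}\ge\theta^{2k-4}N^{k-1}.
\]

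\emph{Case B: the sum of the first $k-1$ densities is close to $1$ or below it.} Here we cannot recurse directly. Instead we replace $X_{k-1},X_k$ by the set
\[
Y=\{y: r_{X_{k-1},X_k}(y)\ge \theta^2 N/2\}.
\]
Cauchy--Davenport gives $|X_{k-1}+X_k|\ge\min(N,|X_{k-1}|+|X_k|-1)$. A popular-sums counting argument then shows $|Y|\ge(\theta_{k-1}+\theta_k-c\theta)N$. The argument is this: the total count $|X_{k-1}||X_k|$ is spread over the sumset, and the unpopular elements carry at most $N\theta^2/2$ mass. A Pollard-type inequality $\sum_y\min(r(y),t)\ge t\min(N,|A|+|B|-t)$ makes this precise.

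We then apply induction to $X_1,\dots,X_{k-2},Y$ with $\theta_Y=\theta_{k-1}+\theta_k-3\theta$. The new excess is $\sum\theta_i-1-3\theta\ge(3k-5)\theta-3\theta=(3k-8)\theta$, so the new parameter is still $\ge\theta$. This is where $3k-5$ comes from. Each representation through $Y$ lifts to at least $\theta^2N/2$ representations, giving
\[
r(n)\ge\theta^{2k-5}N^{k-2}\cdot\theta^2N/2.
\]
That is off by a factor $2$, so the threshold should be tuned, e.g.\ $t=\theta^2N$ with a sharper Pollard bound, or absorbed by using $\theta\le1$ slack. The condition $N>2\theta^{-2}$ ensures $t\ge2$ and that the $-1$ and $-t$ losses in Pollard's bound are at most $\theta N$.

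\textbf{Expected main obstacle.} The delicate step is Case B: obtaining $|Y|\ge(\theta_{k-1}+\theta_k-3\theta)N$ with exactly the constants that make the exponent $2k-3$ and the denominator $3k-5$ close up. We would first try Pollard's theorem $\sum_y\min(r(y),t)\ge t\min(N,|A|+|B|-t)$ with $t=\theta N$. Combined with $r(y)\le\min(|A|,|B|)$, this should bound the measure of $Y$ at threshold $\theta^2N$. Actually, it may be simpler to run Case B always, so that no case split is needed, and verify the arithmetic $\theta_Y\ge\theta$ using $\theta\le\theta_i$.
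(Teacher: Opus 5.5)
The paper does not prove this lemma; it only cites \cite[Lemma 3.3]{LP}, so your proposal has to stand on its own. Your skeleton is a workable route: induction on $k$, where merging two sets into their popular sums costs $3\theta$ in density (hence $3k-5$) and gains multiplicity $\theta^2N$ (hence $2k-3$). However, the step carrying all the content is not established, and the parameters you propose do not close.

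\emph{(i) The threshold.} With threshold $\theta^2N/2$, Case B gives only $\theta^{2k-3}N^{k-1}/2$. Case B has no spare power of $\theta$, and $\theta'$ may equal $\theta$, so this factor cannot be absorbed; it would compound along the induction. You need threshold $s=\lceil\theta^2N\rceil$. Pollard's theorem with $t=\lceil\theta N\rceil$ delivers it; $t$ is admissible since $|X_{k-1}|,|X_k|\ge t$. Put $A=X_{k-1}$, $B=X_k$, $Y=\{y: r_{A,B}(y)\ge s\}$, and use $\sum_y\min(t,r_{A,B}(y))\le t|Y|+(s-1)(N-|Y|)$.

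If $|A|+|B|-t\ge N$, Pollard forces $r_{A,B}(y)\ge t$ for every $y$. Then directly $r(n)\ge \theta N\prod_{i\le k-2}|X_i|\ge\theta^{k-1}N^{k-1}\ge\theta^{2k-3}N^{k-1}$.

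Otherwise set $m=|A|+|B|-t<N$. Using $s-1<\theta^2N$, $N-m\le N-t\le(1-\theta)N$ (as $|A|+|B|\ge 2t$), $t-s+1>\theta(1-\theta)N$, and $\theta\le 1/2$ for $k\ge3$, you get
\[
|Y|\ \ge\ m-\frac{(s-1)(N-m)}{t-s+1}\ >\ m-\frac{\theta^2N\cdot(1-\theta)N}{\theta(1-\theta)N}\ =\ |A|+|B|-t-\theta N\ >\ |A|+|B|-2\theta N-1 .
\]
Since $\theta N>1$, this exceeds $(\theta_{k-1}+\theta_k-3\theta)N$. In this case one also has $\theta_{k-1}+\theta_k-3\theta<1$, so the new density is admissible. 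This handles the issue of $\theta_Y$ possibly exceeding $1$, which you did not address.

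\emph{(ii) The Case A criterion.} What makes Case A work is exactly $\sum_{i<k}\theta_i-1\ge(3k-8)\theta$, since that is what gives $\theta'\ge\theta$. Your criterion ``$\ge\theta_k$'' only yields $\sum_{i<k}\theta_i-1\ge\frac{3k-5}{2}\theta$. For $k\ge4$ this is weaker, and $\theta'$ can then be close to $\theta/2$; for large $k$ the spare factor $\theta_k$ does not compensate for that loss.

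\emph{(iii) The fallback.} Always running Case B and checking $\theta_Y\ge\theta$ ``from $\theta\le\theta_i$'' is false for an arbitrary pair. For $k=4$ and $(\theta_i)=(1,1,0.1,0.1)$ one has $\theta=0.1$, and merging the two small sets gives $\theta_Y=-0.1$. There are two fixes:
\begin{itemize}
\item Merge the two largest sets. Their densities sum to at least $2S/k\ge 4(S-1)/(3k-5)\ge4\theta$ for $k\ge3$, where $S=\sum\theta_i\le k$.
\item Keep your split but use the corrected criterion from (ii). When it fails, $\theta_k>S-1-(3k-8)\theta\ge 3\theta$, so every $\theta_i>3\theta$ and $\theta_Y>3\theta$.
\end{itemize}

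With these repairs, the merged family has excess $S-1-3\theta\ge(3k-8)\theta$. Hence its parameter satisfies $\theta'\ge\theta$ and $N>2\theta'^{-2}$, and you obtain $\theta^{2k-5}N^{k-2}\cdot\theta^2N=\theta^{2k-3}N^{k-1}$ as stated.
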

\begin{proof}
	See \cite[Lemma 3.3]{LP}.
\end{proof}
Next, we give a lower bound for $a_1'\ast a_2'\ast a_3'\ast a_4'(n')$. Let $A'_i = \{x \in \mathbb{Z}_N : a'_i(x) \geq \alpha'_i\kappa/N\}$. Applying Lemma \ref{upper bound for convolution} with $C_{5} = 2/\kappa$, we have
\begin{equation*}
	\alpha'_i = \sum_{x \in \mathbb{Z}_N} a_i(x) = \sum_{x \in \mathbb{Z}_N} a'_i(x) \le \frac{1+\kappa}{N} |A'_i| + \frac{\alpha'_i\kappa}{N}(N - |A'_i|),
\end{equation*}
which implies that
\begin{equation*}
	|A'_i| \geq \frac{\alpha'_i(1-\kappa)}{1+\kappa} N.
\end{equation*}
By \eqref{lower bound (1)} and \eqref{lower bound (2)}, we have
\begin{equation*}
	\frac{\alpha'_i(1-\kappa)}{1+\kappa}\geq \dfrac{\kappa}{2}
\end{equation*}
and
\begin{equation*}
	\sum_{i=1}^{4}\dfrac{\alpha'_i(1-\kappa)}{1+\kappa}=\dfrac{1-\kappa}{1+\kappa}(\alpha'_1+\alpha'_2+\alpha'_3+\alpha'_4)\geq 1+\kappa/2.
\end{equation*}
Therefore, by Lemma \ref{sumset}, we have
\begin{equation*}
	|\{(x_1, x_2, x_3, x_4) : x_i \in A'_i, \; n' = x_1 + x_2 + x_3 + x_4\}|\geq C_{6}(\kappa) N^{3},
\end{equation*}
where $C_{6}(\kappa)>0$ is a constant depending only on $\kappa$. It follows that
\begin{equation*}
	\sum_{\substack{x_1,x_2,x_3,x_4 \in \mathbb{Z}_N\\ x_1+x_2+x_3+x_4=n'}} \prod_{i=1}^{4}a_i'(x_i)\geq \sum_{\substack{x_i\in A'_i\\ x_1+x_2+x_3+x_4=n'}} \prod_{i=1}^{4}a_i'(x_i)\geq \alpha'_1\alpha'_2\alpha'_3\alpha'_4\kappa^{4}C_{6}(\kappa)N^{-1}.
\end{equation*}
By \eqref{lower bound (1)}, we have
\begin{equation*}
	\sum_{\substack{x_1,x_2,x_3,x_4 \in \mathbb{Z}_N\\ x_1+x_2+x_3+x_4=n'}} \prod_{i=1}^{4}a_i'(x_i)\geq \dfrac{C_{7}(\kappa)}{N},
\end{equation*}
where $C_{7}(\kappa)>0$ is a constant depending only on $\kappa$. By Lemma \ref{Approximation of convolution}, we have
\begin{equation*}
	\sum_{\substack{x_1,x_2,x_3,x_4 \in \mathbb{Z}_N\\ x_1+x_2+x_3+x_4=n'}} \prod_{i=1}^{4}a_i(x_i)\geq \dfrac{C_7(\kappa)-C_1\cdot(\varepsilon^2 \delta^{-5/2} + \delta^{1/5})}{N}.
\end{equation*}
We choose $\delta$ and $\varepsilon$ such that both $\varepsilon^2 \delta^{-5/2}$ and $\delta^{1/5}$ tend to $0$, whenever $N$ is sufficiently large. Thus for sufficiently large $n$,
\begin{equation*}
	\sum_{\substack{x_1,x_2,x_3,x_4 \in \mathbb{Z}_N\\ x_1+x_2+x_3+x_4=n'}} \prod_{i=1}^{4}a_i(x_i)>0.
\end{equation*}
This completes the proof of Theorem \ref{Theorem-1}.

\end{document}